\begin{document}

\title{Bernstein-Type Bounds for Beta Distribution}

%\author[a]{\inits{M.}\fnms{Maciej}~\snm{Skorski}
%\thanksref{a}\ead[label=e1]{maciej.skorski@gmail.com}\orcid{0000-0003-2997-7539}}
%\thankstext[type=corresp,id=cor1]{Corresponding author.}
%\thanksref{}
%\ead[label=]{}
%\thankstext[type=corresp,id=]{}
%\thankstext[id=]{}
%\address[a]{\institution{University of Luxembourg} %\cny{}}

\author{Maciej Skorski}
\institute{University of Luxembourg}

%0000-0003-2997-7539
%\dedicated{}
%\markboth{}{}

\maketitle

\begin{abstract}
%The ubiquitous beta distribution is defiend by an intractable integral which motivates an intensive research on closed-form approximations.
This work obtains a sharp closed-form exponential concentration inequality of Bernstein (sub-gamma) type for the ubiquitous beta distribution, improving upon
sub-gaussian and sub-gamma bounds previously studied in this context.

The proof leverages the novel and handy recursion of order 2 for central moments of the beta distribution, obtained from  hyper-geometric representations;
this recursion is useful for obtaining explicit expressions for central moments, as well as for developing their various approximations.
\end{abstract}

\begin{keywords}
Beta Distribution,
Concentration Bounds,
Bernstein Inequality
\end{keywords}

\definecolor{backcolour}{rgb}{0.95,0.95,0.92}
\definecolor{codegreen}{rgb}{0,0.6,0}
\definecolor{mymauve}{rgb}{0.58,0,0.82}
\lstdefinestyle{myStyle}{
    backgroundcolor=\color{backcolour},   
    keywordstyle=\color{blue},
    commentstyle=\color{codegreen},
    stringstyle=\color{mymauve},
    basicstyle=\footnotesize,
    breakatwhitespace=false,         
    breaklines=true,                 
    keepspaces=true,                 
    %numbers=left,       
    %numbersep=10pt,
    xleftmargin=10pt,
    showspaces=false,                
    showstringspaces=false,
    showtabs=false,                  
    tabsize=4,
    frame=L,
    captionpos=b, 
    abovecaptionskip=1\baselineskip,
}
%\lstset{style=myStyle}

\definecolor{codegreen}{rgb}{0,0.6,0}
\definecolor{codegray}{rgb}{0.5,0.5,0.5}
\definecolor{codepurple}{rgb}{0.58,0,0.82}
\definecolor{backcolour}{rgb}{0.95,0.95,0.92}
\lstdefinestyle{codestyle}{
    backgroundcolor=\color{backcolour},   
    commentstyle=\color{codegreen},
    keywordstyle=\color{magenta},
    numberstyle=\tiny\color{codegray},
    stringstyle=\color{codepurple},
    basicstyle=\footnotesize,
    breakatwhitespace=false,         
    breaklines=true,                 
    captionpos=b,                    
    keepspaces=true,                 
    numbers=left,                    
    numbersep=5pt,        
    xleftmargin=10pt,          
    showspaces=false,                
    showstringspaces=false,
    showtabs=false,                  
    tabsize=2,
}
\lstset{
	style=codestyle,
	language=Python,
	morekeywords = {as} 
}

\section{Introduction}

\subsection{Background}

The Beta distribution is ubiquitous in statistics due to its flexibility. Among many applications, it is used in analyses of uniform order statistics~\cite{jones2002fractional}, 
problems in Euclidean geometry~\cite{frankl1990some}, general theory of stochastic processes~\cite{mitov2004beta},
and applied statistical inference; the last category of applications includes time estimation in project management~\cite{clark1962pert}, hypothesis testing~\cite{zhang2002beta}, A/B testing in business~\cite{stucchio2015bayesian}, modelling in life sciences~\cite{williams1975394} and others~\cite{kim2009probabilistic,kipping2013parametrizing}.

Unfortunately, the importance and simplicity do not go in pairs. The distribution of  $X\sim\mathsf{Beta}(\alpha,\beta)$ with parameters $\alpha,\beta>0$ is given by
\begin{align}\label{eq:beta}
    \mathbf{P}\{X\leqslant \epsilon\} = 
    \int_{0}^{\epsilon} \frac{x^{\alpha-1}(1-x)^{\beta-1}}{B(\alpha,\beta)}\mbox{d} x,\ 0 \leqslant \epsilon \leqslant 1,
\end{align}
where $B(\alpha,\beta) \triangleq \int_{0}^{1}x^{\alpha-1}(1-x)^{\beta-1}\mbox{d}x$ is the normalizing constant; the integral \eqref{eq:beta}, also known as the incomplete beta function~\cite{dutka1981incomplete},
is intractable. Thus, there is a strong demand for \emph{closed-form approximations} of tail probabilities, for example in the context of adaptive Bayesian inference~\cite{elder2016bayesian}, Bayesian non-parametric statistics~\cite{castillo2017polya}, properties of random matrices~\cite{frankl1990some,perry2020statistical}, and (obviously) large deviation theory~\cite{zhang2020non}.

The main goal of this paper is to give accurate closed-form bounds for the beta distribution in the form of \emph{sub-gamma type exponential concentration inequalities}~\cite{boucheron2003concentration}, that is,
\begin{align}\label{eq:bernstein}
 \mathbf{P}\{X-\mathbf{E}[X]<-\epsilon\},\mathbf{P}\{X-\mathbf{E}[X]>\epsilon\} \leqslant \exp\left(-\frac{\epsilon^2}{2v+2c\epsilon}\right)
\end{align}
for any $\epsilon\geqslant 0$ and the two parameters:
the \emph{variance proxy}
 $v$ and the \emph{scale} $c$, both depending on $\alpha,\beta$. Such concentration bounds, pioneered by Bernstein~\cite{bernstein1946theory} and popularized by Hoeffding~\cite{hoeffding1994probability}, are capable of modelling both the sub-gaussian and sub-exponential behaviours. Due to this flexibility, bounds of this sort are the working horse of  approximation arguments used in modern statistics~\cite{kahane1960proprietes,boucheron2003concentration,maurer2021concentration,wainwright2019high}.

\subsection{Contribution}

The contributions of this work are as follows:
\begin{itemize}
    \item \textbf{Optimal Bernstein-type concentration inequality}. We establish an exponential concenration bound \eqref{eq:bernstein} with $v$ that matches the variance (which is optimal) and, respectively, the optimal value of $c$ which depends on the ratio of the third and second moment. This bound is shown optimal in the regime of small deviations.
    \item \textbf{Useful recursion for central moments} of the Beta distribution: it is of order 2 with coefficients linearly dependent on the moment order. This formula addresses the lack of a simple closed-form formula for higher-order moments. We use it to estimate the moment generating function. 
    \item \textbf{Implementation and numerical evaluation}. 
    Symbolic algebra software has been used to validate numerical comparison, and a numerical evaluation is provided to illustrate the new bounds. The code snippets are shared in the paper, and the evaluation is also shared as a Colab notebook~\cite{mskorski_beta}.
\end{itemize}

\subsection{Related Work}

When judging the bounds in the form of \eqref{eq:bernstein}, it is important to insist on
the optimality of the sub-gaussian behavior. For small deviations $\epsilon$ the bound \eqref{eq:bernstein} becomes approximately gaussian with variance $v$, thus we ideally want $v^2=\mathbf{Var}[X]$ and exponential bounds of this type are considered optimal in the literature~\cite{ben2017concentration}. On the other hand, bounds with  $v^2>\mathbf{Var}[X]$ essentially overshoot the variance, leading to unnecessary wide tails and increasing uncertainty in statistical inference. % (the parameter $c$ is usually very hard to determine exactly).

%The parameter $c$ controls large deviations and is usually determinted 
Bearing this in mind, we review prior suboptimal bounds in the form of \eqref{eq:bernstein}:
\begin{itemize}
    \item Folklore methods give some crude bounds, for example one can express a beta random variable in terms of gamma distributions and utilize their concentration properties; such techniques do not give optimal exponents.
    \item Some bounds in the form of \eqref{eq:bernstein} can be derived from established inequalities on the incomplete beta function. Using the well-known inequality \cite{dumbgen1998new}, which depends on a Kullback-Leibler term, unfortunately leads to suboptimal bounds in the regime of small deviations (as seen from the Taylor approximation, we necessarily obtain the variance factor of $v^2 =\frac{\alpha+\beta+1}{\alpha+\beta}\cdot\mathbf{Var}[X]$, hence overshooting the variance). 
    \item The work \cite{frankl1990some} gives bounds with explicit but suboptimal $v$ and $c$, only valid in a limited range of deviations $\sqrt{\frac{6\alpha}{(\alpha+\beta)^2}} < \epsilon < \frac{\alpha}{\alpha+\beta}$.% for $\alpha\ll \beta$ this means $\epsilon\gg \mathbf{E}[X]$. 
    %This bound is merely an auxiliary result in the simplified analysis of the Johnson-Lindenstrauss Lemma; 
    The proof relies on specific integral estimates, and cannot be sharpened much.
    \item The work \cite{marchal2017sub} determines the best bound assuming $c=0$ (that is, of sub-gaussian type). They are not in a closed form, but can be numerically computed as a solution of a transcendental equation. %A working code in \texttt{R} is also offered. 
    Although the bound is quite sharp for the symmetric case $\alpha=\beta$, it is much worse than our bound when the beta distribution is skewed (the typical case). 
    \item The work \cite{zhang2020non} obtains suboptimal $v$ and $c$, %namely $v^2 = O\left( \frac{\min\{\alpha,\beta\}}{\max\{\alpha,\beta\}^2}\right)$ and $c =O\left(\frac{1}{\max\{\alpha,\beta\}}\right)$,
    shown to be far from the true values by unknown constant factors. With these techniques, it is not possible to obtain the optimal exponent, which is the focus of this work.
\end{itemize}
As for the central moment recursion, we note that:
\begin{itemize}
    \item Little is known about formulas for higher central moments (as opposed to raw moments, simpler to compute but not useful for concentration inequalities). Textbooks do not discuss neither explicit formulas nor derivation algorithms beyond the order of 4 (skewness and kurtosis)
    \item The modern literature~\cite{johnson1995continuous,gupta2004handbook} credits the recursive formulas found in ~\cite{muhlbach1972rekursionsformeln}. Unfortunately, that recursion is computationally inefficient due to its unbounded depth and too complicated to be manipulated for the task of closed-form moment estimation.
\end{itemize}

\begin{remark}
While the current paper was under review, Henzi and Duembgen \cite{henzi2022some} presented similar tail bounds, obtained by working with integral-derived inequalities. When framed as Bernstein's inequality, their constant $c$ is worse for the heavier tail (e.g. the right tail in case $\alpha<\beta$), but better on the lighter tail (e.g. $c<0$ for the left tail in case $\alpha<\beta$).
\end{remark}

\subsection{Organization}

The remainder of the paper is organized as follows:
\Cref{sec:results} presents 
the results, \Cref{sec:prelim} provides the technical background, \Cref{sec:proofs} gives proofs and \Cref{sec:conclude} concludes the work.
The implementation and numerical evaluation are shared in the notebook~\cite{mskorski_beta}.

\section{Results}\label{sec:results}

\subsection{Optimal Bernstein-Type Concentration Bounds}

The central result of this work is a Bernstein-type tail bound with numerically optimal parameters. The precise statement is given in the following theorem.
\begin{theorem}[Bernstein's Inequality]\label{thm:beta_bernstein}
Let $X\sim\mathsf{Beta}(\alpha,\beta)$. Define the parameters 
\begin{align}
\begin{aligned}
v & \triangleq {\frac{\alpha\beta}{(\alpha+\beta)^2(\alpha+\beta+1)}}  \\
c & \triangleq  \frac{2 \left(\beta - \alpha\right)}{\left(\alpha + \beta\right) \left(\alpha + \beta + 2\right)}. \\
\end{aligned}
\end{align}
Then the upper tail of $X$ is bounded as 
\begin{align}
\mathbf{P}\left\{X > \mathbf{E}[X]+\epsilon \right\}  \leqslant 
\begin{cases}
\exp\left(-\frac{\epsilon^{2}}{2 \left(v+\frac{c \epsilon}{3} \right)}\right) & \beta\geqslant \alpha \\
\exp\left(-\frac{\epsilon^{2}}{2v} \right) & \beta < \alpha,
\end{cases}
\end{align}
and  the lower tail of $X$ is bounded as
\begin{align}
\mathbf{P}\left\{X < \mathbf{E}[X]-\epsilon \right\}  \leqslant 
\begin{cases}
\exp\left(-\frac{\epsilon^{2}}{2 \left(v+\frac{c \epsilon}{3} \right)}\right) & \alpha \geqslant \beta \\
\exp\left(-\frac{\epsilon^{2}}{2v} \right) & \alpha < \beta.
\end{cases}
\end{align}
\end{theorem}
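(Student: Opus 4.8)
The plan is to run the exponential (Chernoff) method, reducing the whole statement to a bound on the centered moment generating function $M(\lambda)\triangleq\mathbf{E}\bigl[e^{\lambda(X-\mathbf{E}[X])}\bigr]$. The target is the sub-gamma estimate
\begin{align}\label{eq:mgfbound}
\log M(\lambda)\;\leqslant\;\frac{v\lambda^{2}/2}{1-c\lambda/3}\qquad\bigl(0\leqslant\lambda<3/c\bigr),
\end{align}
with the right-hand side read as $v\lambda^{2}/2$ when $c\leqslant0$ (then valid for all $\lambda\geqslant0$). Granting \eqref{eq:mgfbound}, the upper-tail bound is routine: $\mathbf{P}\{X-\mathbf{E}[X]>\epsilon\}\leqslant e^{-\lambda\epsilon}M(\lambda)$, and for $\beta\geqslant\alpha$ (so $c\geqslant0$) the choice $\lambda=\epsilon/(v+c\epsilon/3)$ stays in the admissible range and gives exactly $\exp\!\bigl(-\epsilon^{2}/(2(v+c\epsilon/3))\bigr)$, while for $\beta<\alpha$ (so $c<0$) one has $1-c\lambda/3\geqslant1$, hence $\log M(\lambda)\leqslant v\lambda^{2}/2$, and minimizing over all $\lambda\geqslant0$ yields $\exp\!\bigl(-\epsilon^{2}/(2v)\bigr)$. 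The lower-tail statements then follow for free from the reflection $X\sim\mathsf{Beta}(\alpha,\beta)\iff 1-X\sim\mathsf{Beta}(\beta,\alpha)$ together with $\{X<\mathbf{E}[X]-\epsilon\}=\{(1-X)>\mathbf{E}[1-X]+\epsilon\}$, which turns the lower tail into an upper tail with $\alpha$ and $\beta$ interchanged (and hence $c$ negated).

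The substance is \eqref{eq:mgfbound}, and this is precisely what the order-$2$ central-moment recursion is for. Writing $\mu_{m}\triangleq\mathbf{E}[(X-\mathbf{E}[X])^{m}]$ and $M(\lambda)=1+\sum_{m\geqslant2}\tfrac{\lambda^{m}}{m!}\mu_{m}$, the bound \eqref{eq:mgfbound} is implied, after passing to cumulants, by the estimate $\kappa_{m}\leqslant\tfrac{m!}{2}\,v\,(c/3)^{m-2}$ for every $m\geqslant2$; note that $\kappa_{2}=v$ and $\kappa_{3}=\mu_{3}=vc$, so the inequality is an equality for $m=2,3$ — this is exactly what pins down the value of $c$ and makes the scale optimal. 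I would prove the cumulant estimate by strong induction on $m$: the recursion expresses $\mu_{m+1}$ through $\mu_{m}$ and $\mu_{m-1}$ with coefficients rational with numerator and denominator affine in $m$, so feeding in the inductive hypotheses reduces each step to a polynomial inequality in $m,\alpha,\beta$. Equivalently — and, I suspect, more cleanly — the recursion is just the moment side of the second-order linear ODE
\begin{align}\label{eq:mgfode}
\lambda M''+\Bigl(\alpha+\beta+\tfrac{(\alpha-\beta)\lambda}{\alpha+\beta}\Bigr)M'-\tfrac{\alpha\beta}{(\alpha+\beta)^{2}}\,\lambda M=0
\end{align}
satisfied by $M$ (obtained from Kummer's equation for $\mathbf{E}[e^{\lambda X}]={}_{1}F_{1}(\alpha;\alpha+\beta;\lambda)$); writing $L=\log M$ turns \eqref{eq:mgfode} into a scalar Riccati equation for $L'$, and \eqref{eq:mgfbound} then follows from a barrier/comparison argument for this Riccati equation on $[0,3/c)$ with the matching initial data $L(0)=L'(0)=0$, $L''(0)=v$.

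I expect the main obstacle to be exactly this step: propagating the sharp factorial-times-geometric profile of the cumulants, with the small constant $c/3$ rather than a cruder scale, through the two-term recursion is a delicate explicit estimate, and it must be carried out while keeping the sign bookkeeping consistent, since the sign of $\mu_{m}$ (equivalently of $(c/3)^{m-2}$) is dictated by that of $\beta-\alpha$. The regime $\beta<\alpha$ needs separate care: there $c<0$, the per-term comparison to a negative target is not automatic, and one needs an auxiliary argument that the centered MGF is genuinely dominated by $e^{v\lambda^{2}/2}$ — here the boundedness $X-\mathbf{E}[X]\leqslant\beta/(\alpha+\beta)$ and the fact that $\log M(\lambda)$ is only asymptotically linear in $\lambda$ (from the large-argument asymptotics of ${}_{1}F_{1}$) are what one would use. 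A final minor check is that the minimizing $\lambda$ stays inside $(0,3/c)$ for all admissible $\epsilon\leqslant1-\mathbf{E}[X]$.
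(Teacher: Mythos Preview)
Your overall architecture is right and matches the paper: reduce to the upper tail by the reflection $1-X\sim\mathsf{Beta}(\beta,\alpha)$, bound the centered cumulant generating function, then run Cram\'er--Chernoff. The gap is in the heart of the argument, the MGF bound.

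You propose to prove $\log M(\lambda)\leqslant v\lambda^{2}/(2(1-c\lambda/3))$ via the cumulant estimate $\kappa_m\leqslant\tfrac{m!}{2}v(c/3)^{m-2}$ by ``strong induction on $m$: the recursion expresses $\mu_{m+1}$ through $\mu_m$ and $\mu_{m-1}$.'' But the recursion is for the \emph{central moments} $\mu_m$, not the cumulants $\kappa_m$, and the two are related by a nonlinear (Bell-polynomial) transformation, so there is no direct inductive route from the moment recursion to a cumulant inequality. If instead you meant the moment version $\mu_m\leqslant\tfrac{m!}{2}v(c/3)^{m-2}$, that is simply false: for $\alpha=1,\beta=3$ one computes $v=3/80$, $c=1/6$, $\mu_4=39/8960$, while $\tfrac{4!}{2}v(c/3)^{2}=1/720<\mu_4$. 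So neither reading of the induction step goes through.

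The paper avoids this trap by targeting a \emph{different} intermediate inequality: it bounds the logarithmic derivative $\phi'/\phi$ rather than $\psi=\log\phi$ directly. The key observation is that $(1-ct)\phi'(t)-vt\,\phi(t)$ expands as $\sum_{d\geqslant 3}\bigl(d m_d-(d-1)c\,m_{d-1}-v\,m_{d-2}\bigr)t^{d-1}$ with $m_d=\mu_d/d!$, and each coefficient is a \emph{linear} combination of three consecutive normalized moments; the order-$2$ recursion eliminates $m_{d-2}$ and reduces nonpositivity to an explicit sign check in $d,\alpha,\beta$. This yields $\phi'/\phi\leqslant vt/(1-ct)$, hence after integration $\psi(t)\leqslant -v\bigl(ct+\log(1-ct)\bigr)/c^{2}$. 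The factor $1/3$ you are aiming for does \emph{not} come from a tighter MGF bound; it appears only at the Cram\'er step, through the elementary inequality $\log(1+x)\leqslant x-\tfrac{x^{2}}{2(1+x/3)}$ applied with $x=c\epsilon/v$ when optimizing $t\epsilon-\psi(t)$. Your Riccati alternative is closer in spirit, but with the barrier derived from $\ell(\lambda)=v\lambda^{2}/(2(1-c\lambda/3))$ the supersolution check does not linearize; it is precisely the choice of barrier $v\lambda/(1-c\lambda)$ for $(\log\phi)'$ that makes the coefficient-by-coefficient comparison tractable. The case $\beta<\alpha$ is handled by the same series argument with $c$ replaced by $0$, using that odd central moments are negative there; the boundedness/asymptotic-linearity considerations you mention do not by themselves give a sub-Gaussian bound.
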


\begin{remark}[Variance and Scale Parameters]
The variance parameter equals $v=\mathbf{Var}[X]$, and the scale parameter equals $c=\frac{\mathbf{E}[(X-\mathbf{E}[X])^3]}{\mathbf{Var}[X]}$.
\end{remark}

\begin{remark}[Mixed Gaussian-Exponential Behaviour]
For small values of $\epsilon$ the bound behaves like gaussian with variance $v=\mathbf{Var}[X]$. For bigger values of $\epsilon$, the bound is close to the exponential tail with parameter
$\frac{2c}{3v}$.
\end{remark}

The result below shows that the parameters $c$ and $v$ are best possible, in the regime of small deviations $\epsilon$, for the exponential moment method:
%Furthermore, for any range of  $\epsilon$ the exponent is optimal up to a small constant. %Our $v$ and $c$ indeed match up to a bounded factor to less precise formulas from \cite{zhang2002beta}.

\begin{theorem}[Best Cram\'{e}r-Chernoff Bound]\label{thm:lower_bound}
The best bound that can be obtained from the Cramer-Chernoff method is
\begin{align}
\mathbf{P}\left\{X > \mathbf{E}[X]+\epsilon \right\}  \leqslant \mathrm{e}^{-\frac{\epsilon^{2}}{2 v} + \frac{c \epsilon^{3}}{6 v^{2}} + O\left(\epsilon^{4}\right)},
\end{align}
as $\epsilon \to 0$, with constants $c,v$ as in \Cref{thm:beta_bernstein}.
\end{theorem}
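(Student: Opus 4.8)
The plan is to read off the small-$\epsilon$ asymptotics of the Cram\'er--Chernoff exponent from the cumulant expansion of the centred log-moment generating function, using the identification of $v$ and $c$ with the second and (normalised) third central moments already recorded in the preceding remark. Write $Y\triangleq X-\mathbf{E}[X]$ and $\psi(\lambda)\triangleq\log\mathbf{E}[\mathrm{e}^{\lambda Y}]$. Since $X\in[0,1]$ is bounded, $\psi$ is finite and real-analytic on all of $\mathbb{R}$, and for every $\epsilon>0$ the Cram\'er--Chernoff method gives
\[
\mathbf{P}\{Y>\epsilon\}\ \leqslant\ \inf_{\lambda\geqslant 0}\mathrm{e}^{-\lambda\epsilon}\mathbf{E}[\mathrm{e}^{\lambda Y}]\ =\ \exp\bigl(-\psi^{\star}(\epsilon)\bigr),\qquad \psi^{\star}(\epsilon)\triangleq\sup_{\lambda\geqslant 0}\bigl(\lambda\epsilon-\psi(\lambda)\bigr),
\]
and $\psi^{\star}$ is by definition the best exponent this method can deliver. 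Thus the theorem reduces to showing $\psi^{\star}(\epsilon)=\frac{\epsilon^{2}}{2v}-\frac{c\epsilon^{3}}{6v^{2}}+O(\epsilon^{4})$ as $\epsilon\to 0$.

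First I would record the leading Taylor coefficients of $\psi$ at $0$: they are the cumulants of $Y$, so $\psi(0)=\psi'(0)=0$ by centredness, $\psi''(0)=\mathbf{Var}[X]=v$, and $\psi'''(0)=\mathbf{E}[(X-\mathbf{E}[X])^{3}]=cv$ by the variance-and-scale remark, giving $\psi(\lambda)=\frac{v}{2}\lambda^{2}+\frac{cv}{6}\lambda^{3}+O(\lambda^{4})$. Next, for small $\epsilon$ the supremum defining $\psi^{\star}(\epsilon)$ is attained at the unique $\lambda(\epsilon)$ solving $\psi'(\lambda)=\epsilon$ (here $\psi'$ is strictly increasing with $\psi'(0)=0$ and $\psi''(0)=v>0$). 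Inverting the series $\psi'(\lambda)=v\lambda+\frac{cv}{2}\lambda^{2}+O(\lambda^{3})=\epsilon$ yields $\lambda(\epsilon)=\frac{\epsilon}{v}-\frac{c\epsilon^{2}}{2v^{2}}+O(\epsilon^{3})$, and substituting back into $\psi^{\star}(\epsilon)=\lambda(\epsilon)\epsilon-\psi(\lambda(\epsilon))$ and collecting terms through order $\epsilon^{3}$ gives precisely $\psi^{\star}(\epsilon)=\frac{\epsilon^{2}}{2v}-\frac{c\epsilon^{3}}{6v^{2}}+O(\epsilon^{4})$; exponentiating is the claimed bound. An equivalent route that avoids the bookkeeping is to use that $\psi^{\star}$ is the Legendre dual of the smooth strictly convex $\psi$, hence smooth near $0$ with $\psi^{\star}(0)=(\psi^{\star})'(0)=0$, $(\psi^{\star})''(0)=1/\psi''(0)=1/v$, and $(\psi^{\star})'''(0)=-\psi'''(0)/\psi''(0)^{3}=-cv/v^{3}=-c/v^{2}$ by the standard derivative formulas for an inverse function, so its degree-$3$ Taylor polynomial is $\frac{\epsilon^{2}}{2v}-\frac{c\epsilon^{3}}{6v^{2}}$.

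The only point requiring care either way is the $O(\epsilon^{4})$ remainder: one must check that $\lambda(\epsilon)=O(\epsilon)$, so that the Lagrange remainders of the Taylor expansions of $\psi$ and $\psi'$ (controlled by a bound on $\psi^{(4)}$ on a fixed neighbourhood of $0$) genuinely translate into $O(\epsilon^{4})$ and $O(\epsilon^{3})$ errors, and that the series inversion is legitimate; both follow from real-analyticity of $\psi$ near $0$ together with the implicit function theorem applied to $\psi'(\lambda)=\epsilon$. I expect this remainder control to be the only mildly technical step — the substantive content, namely the values of the two leading coefficients $\tfrac1{2v}$ and $-\tfrac{c}{6v^{2}}$, is already furnished by the identification of $v$ and $c$ with the cumulants in the preceding remark.
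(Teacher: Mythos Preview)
Your proposal is correct and follows essentially the same route as the paper: identify the cumulants $\psi''(0)=v$ and $\psi'''(0)=cv$, invert $\psi'(\lambda)=\epsilon$ by a series expansion to get $\lambda(\epsilon)=\epsilon/v-c\epsilon^{2}/(2v^{2})+O(\epsilon^{3})$, and substitute back into $\lambda\epsilon-\psi(\lambda)$. The paper invokes the Lagrange Inversion Theorem explicitly (with the computation delegated to \texttt{SymPy}), whereas you do the two-term inversion by hand and justify the remainder via the implicit function theorem; your alternative via the Legendre-dual derivative identities $(\psi^{\star})''(0)=1/\psi''(0)$ and $(\psi^{\star})'''(0)=-\psi'''(0)/\psi''(0)^{3}$ is a tidy shortcut the paper does not mention.
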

Comparing this result with the tail from \Cref{thm:beta_bernstein} as $\epsilon\to0$ we obtain: 
\begin{corollary}[Optimality of \Cref{thm:beta_bernstein}]
The values of constants $c$ and $v$ in the Bernstein-type inequality in \Cref{thm:beta_bernstein} are optimal. 
\end{corollary}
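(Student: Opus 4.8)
The plan is to obtain the corollary by matching the small-$\epsilon$ asymptotics of the bound in \Cref{thm:beta_bernstein} with the best achievable Cram\'er--Chernoff rate from \Cref{thm:lower_bound}. First I would Taylor-expand the Bernstein exponent around $\epsilon=0$: since $\frac{1}{v+c\epsilon/3}=\frac{1}{v}-\frac{c}{3v^{2}}\epsilon+O(\epsilon^{2})$, one gets
\[
-\frac{\epsilon^{2}}{2\left(v+\tfrac{c\epsilon}{3}\right)} \;=\; -\frac{\epsilon^{2}}{2v}+\frac{c\,\epsilon^{3}}{6v^{2}}+O(\epsilon^{4}),
\]
so the tail bound of \Cref{thm:beta_bernstein} equals $\exp\!\bigl(-\tfrac{\epsilon^{2}}{2v}+\tfrac{c\epsilon^{3}}{6v^{2}}+O(\epsilon^{4})\bigr)$. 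This coincides, through the $\epsilon^{3}$ term, with the best Cram\'er--Chernoff bound of \Cref{thm:lower_bound}; the $O(\epsilon^{4})$ remainders are immaterial at the orders that matter. Hence the Bernstein-type bound is, up to higher-order corrections, as good as \emph{any} bound the exponential-moment method can deliver.

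It remains to rule out that a different pair $(v',c')$ does strictly better. Any Bernstein-type bound $\exp\!\bigl(-\tfrac{\epsilon^{2}}{2(v'+c'\epsilon/3)}\bigr)$ obtained by over-bounding the cumulant generating function and applying the Cram\'er--Chernoff method must have rate dominated by the optimal rate of \Cref{thm:lower_bound}, i.e.
\[
\frac{\epsilon^{2}}{2\left(v'+\tfrac{c'\epsilon}{3}\right)} \;\leqslant\; \frac{\epsilon^{2}}{2v}-\frac{c\,\epsilon^{3}}{6v^{2}}+O(\epsilon^{4}) \qquad (\epsilon\to 0).
\]
Expanding the left-hand side as above and comparing coefficients of $\epsilon^{2}$ forces $\tfrac{1}{2v'}\leqslant\tfrac{1}{2v}$, that is $v'\geqslant v$; and in the extremal case $v'=v$, dividing by $\epsilon^{3}>0$ and letting $\epsilon\to0$ forces $-\tfrac{c'}{6v^{2}}\leqslant-\tfrac{c}{6v^{2}}$, that is $c'\geqslant c$. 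Therefore neither the variance proxy $v$ nor, at fixed $v$, the scale $c$ can be decreased, which is exactly the asserted optimality.

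The whole argument is a short Taylor computation followed by a coefficient comparison, so I do not expect a substantive obstacle. The only points needing care are bookkeeping ones: tracking the signs and orders of the expansion terms, and making explicit the (standard) fact that the exponent of any bound the exponential-moment method produces is dominated by the Cram\'er--Chernoff rate function of \Cref{thm:lower_bound}, which is what legitimizes reading off the $\epsilon^{2}$- and $\epsilon^{3}$-coefficients while discarding the $O(\epsilon^{4})$ tail.
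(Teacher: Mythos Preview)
Your proposal is correct and follows essentially the same approach as the paper: Taylor-expand the Bernstein exponent from \Cref{thm:beta_bernstein} to order $\epsilon^3$ and observe that it matches the optimal Cram\'er--Chernoff exponent of \Cref{thm:lower_bound}. The paper states this comparison tersely (``this expansion matches the expansion of the exponent in \Cref{thm:beta_bernstein} up to $O(\epsilon^4)$, which proves the optimality of the numeric constants''), whereas you spell out the coefficient-comparison argument showing that any alternative pair $(v',c')$ must satisfy $v'\geqslant v$ and, at $v'=v$, $c'\geqslant c$; this extra detail is a welcome clarification but not a different idea.
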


\subsection{Handy Recurrence for Central Moments}

Our optimal Bernstein inequality is proved using the novel recursion for central moments, presented below in \Cref{lemma:beta_moments_recurrence} as the contribution of independent interest. Being of order 2 it is not only efficient to evaluate numerically, but also easy to manipulate algebraically when working with closed-form formulas.
%It is obtained by the machinery of hypergeometric functions.
%The second result gives an explicit useful recursion (of order 2) for the central moments.
%We note that this recursion is simpler than the higher-order recursion~\cite{muhlbach1972rekursionsformeln}, widely used in modern texts~\cite{johnson1995continuous,gupta2004handbook}. Thus, it is of independent interest.
%Addressing the aforementioned gap in the literature, it is 
\begin{theorem}[Order 2 Recurrence for Central Moments]\label{lemma:beta_moments_recurrence}
For $X\sim\mathsf{Beta}(\alpha,\beta)$ and any integer order $d\geqslant 2$, the following recurrence relation holds:
\begin{align}
\begin{split}
  \mathbf{E}[(X-\mathbf{E}[X])^d] &= \frac{(d-1)(\beta-\alpha)}{(\alpha+\beta)(\alpha+\beta+d-1)}\cdot\mathbf{E}[(X-\mathbf{E}[X])^{d-1}] \\
  &\quad + \frac{(d-1)\alpha\beta}{(\alpha+\beta)^2(\alpha+\beta+d-1)}\cdot \mathbf{E}[(X-\mathbf{E}[X])^{d-2}].
 \end{split}
\end{align}
\end{theorem}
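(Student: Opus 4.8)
The plan is to derive the recursion from a first-order differential relation linking the beta density to its parameters, or equivalently from a hypergeometric representation of the central moments. Let $\mu=\mathbf{E}[X]=\frac{\alpha}{\alpha+\beta}$, write $Y=X-\mu$, and set $m_d=\mathbf{E}[Y^d]$. The most elementary route is integration by parts on $\int_0^1 (x-\mu)^{d-1} x^{\alpha-1}(1-x)^{\beta-1}\,\mathrm{d}x$. Differentiating the product $x^{\alpha}(1-x)^{\beta}$ gives $\alpha x^{\alpha-1}(1-x)^{\beta} - \beta x^{\alpha}(1-x)^{\beta-1} = x^{\alpha-1}(1-x)^{\beta-1}\bigl(\alpha(1-x) - \beta x\bigr) = x^{\alpha-1}(1-x)^{\beta-1}\bigl(\alpha - (\alpha+\beta)x\bigr) = -(\alpha+\beta) x^{\alpha-1}(1-x)^{\beta-1}(x-\mu)$. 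So the density times $(x-\mu)$ is, up to the constant $-(\alpha+\beta)$, the derivative of $x^{\alpha}(1-x)^{\beta}$. This is the key identity that converts a factor of $(x-\mu)$ into a derivative.

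Next I would compute $m_d = \mathbf{E}[Y^{d}] = \mathbf{E}[Y\cdot Y^{d-1}]$ by inserting the above: $m_d = \frac{-1}{(\alpha+\beta)B(\alpha,\beta)}\int_0^1 (x-\mu)^{d-1}\,\mathrm{d}\!\left(x^{\alpha}(1-x)^{\beta}\right)$. Integration by parts kills the boundary term (since $x^{\alpha}(1-x)^{\beta}$ vanishes at $0$ and $1$ for $\alpha,\beta>0$), leaving $m_d = \frac{d-1}{(\alpha+\beta)B(\alpha,\beta)}\int_0^1 (x-\mu)^{d-2} x^{\alpha}(1-x)^{\beta}\,\mathrm{d}x$. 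Now I must re-express $x^{\alpha}(1-x)^{\beta}$ in terms of the original density $x^{\alpha-1}(1-x)^{\beta-1}$ times a quadratic in $x$. Writing $x(1-x) = x - x^2$ and then re-centering, $x(1-x) = \frac{\alpha\beta}{(\alpha+\beta)^2} - \frac{\beta-\alpha}{\alpha+\beta}(x-\mu) - (x-\mu)^2$ — this is the completion-of-square step, verified by matching the constant, linear, and quadratic coefficients at $x=\mu$. Substituting gives $m_d$ as a combination of $\mathbf{E}[(x-\mu)^{d-2}]$, $\mathbf{E}[(x-\mu)^{d-1}]$, and $\mathbf{E}[(x-\mu)^{d}]$, i.e.
\begin{align}
m_d = \frac{d-1}{\alpha+\beta}\left(\frac{\alpha\beta}{(\alpha+\beta)^2}m_{d-2} - \frac{\beta-\alpha}{\alpha+\beta}m_{d-1} - m_d\right).
\end{align}
Wait — the sign on the middle term needs care; I expect it to come out $+\frac{\beta-\alpha}{\alpha+\beta}m_{d-1}$ after correctly tracking the $-1$ from the IBP identity, so the final collected form matches the statement.

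The final step is purely algebraic: solve the above linear equation for $m_d$, collecting the $m_d$ terms on the left. Multiplying through by $\alpha+\beta$ and moving $(d-1)m_d$ over yields $\bigl(\alpha+\beta+d-1\bigr)m_d = (d-1)\bigl(\tfrac{\beta-\alpha}{\alpha+\beta}m_{d-1} + \tfrac{\alpha\beta}{(\alpha+\beta)^2}m_{d-2}\bigr)$, which is exactly the claimed recurrence after dividing by $\alpha+\beta+d-1$. I should also confirm the base cases are consistent: for $d=2$ the recurrence should reduce to $m_2 = \mathbf{Var}[X] = \frac{\alpha\beta}{(\alpha+\beta)^2(\alpha+\beta+1)}$ using $m_1=0$ and $m_0=1$, which it does. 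The only real subtlety — the "main obstacle" — is bookkeeping: getting every sign right through the integration by parts and the re-centering of $x(1-x)$, and making sure the boundary terms genuinely vanish (which requires $\alpha,\beta>0$, true by assumption, but worth a sentence). The alternative, going through the hypergeometric ${}_2F_1$ representation of $m_d$ and invoking contiguous-function relations, would reach the same place but with heavier machinery; I would prefer the self-contained integration-by-parts argument and only mention the hypergeometric viewpoint as the conceptual origin of the identity.
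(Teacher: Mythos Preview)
Your approach is correct and genuinely different from the paper's. The sign you flagged does indeed flip: the quadratic expansion is $x(1-x)=\frac{\alpha\beta}{(\alpha+\beta)^2}+\frac{\beta-\alpha}{\alpha+\beta}(x-\mu)-(x-\mu)^2$, so the displayed relation should read $m_d=\frac{d-1}{\alpha+\beta}\bigl(\frac{\alpha\beta}{(\alpha+\beta)^2}m_{d-2}+\frac{\beta-\alpha}{\alpha+\beta}m_{d-1}-m_d\bigr)$, and your final collection step then goes through verbatim.

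The paper proceeds quite differently: it first proves a closed hypergeometric representation $m_d=\bigl(-\tfrac{\alpha}{\alpha+\beta}\bigr)^d\,{}_2F_1\!\bigl(\alpha,-d;\alpha+\beta;\tfrac{\alpha+\beta}{\alpha}\bigr)$ via the binomial theorem and the raw-moment formula, and then applies a Gauss contiguous relation linking ${}_2F_1(a,b;c;z)$ to ${}_2F_1(a,b+1;c;z)$ and ${}_2F_1(a,b+2;c;z)$; the recurrence drops out after algebraic simplification (done in \texttt{SymPy}). Your route is the more elementary one---a single integration by parts against the Stein-type identity $\frac{\mathrm{d}}{\mathrm{d}x}\bigl(x^{\alpha}(1-x)^{\beta}\bigr)=-(\alpha+\beta)(x-\mu)\,x^{\alpha-1}(1-x)^{\beta-1}$---and is entirely self-contained, requiring no special-function machinery and no computer algebra. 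What the paper's route buys is a conceptual placement of the result within the theory of contiguous hypergeometric relations, which explains \emph{why} an order-2 recursion with coefficients linear in $d$ must exist; what your route buys is a short proof that any reader can verify by hand. You correctly anticipated this trade-off in your closing remark.
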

\begin{corollary}[P-recursive Property]
The recursion given in \Cref{lemma:beta_moments_recurrence}, after rearrangements, is of order $2$ with coefficients linear in the index $d$.
\end{corollary}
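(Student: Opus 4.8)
The plan is to avoid hypergeometric machinery and instead work directly from the Pearson-type differential equation satisfied by the Beta density. Write $f(x) = x^{\alpha-1}(1-x)^{\beta-1}/B(\alpha,\beta)$ for $x\in(0,1)$ and set $\mu\triangleq\mathbf{E}[X]=\alpha/(\alpha+\beta)$. Computing $f'/f = \frac{\alpha-1}{x}-\frac{\beta-1}{1-x}$ and simplifying, one obtains the single structural identity on which the whole argument rests:
\begin{align}
\bigl[x(1-x)f(x)\bigr]' = \bigl[\alpha-(\alpha+\beta)x\bigr]f(x) = (\alpha+\beta)(\mu-x)f(x).
\end{align}

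Next I would multiply both sides by $(x-\mu)^{d-1}$, integrate over $[0,1]$, and integrate by parts on the left-hand side to move the derivative off the bracket. The boundary term is $\bigl[(x-\mu)^{d-1}x(1-x)f(x)\bigr]_0^1$, which vanishes because $xf(x)=O(x^{\alpha})$ as $x\to0^+$ and $(1-x)f(x)=O((1-x)^{\beta})$ as $x\to1^-$. This endpoint estimate — needed precisely in the regime $\alpha<1$ or $\beta<1$, where $f$ itself is unbounded — is the one place where a little care is required; everything else is elementary. Writing $m_k\triangleq\mathbf{E}[(X-\mu)^k]$, what survives is
\begin{align}
-(\alpha+\beta)\,m_d = -(d-1)\int_0^1 (x-\mu)^{d-2}\,x(1-x)\,f(x)\,\mathrm{d}x.
\end{align}

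Finally I would expand $x(1-x)$ in powers of $(x-\mu)$: substituting $x=(x-\mu)+\mu$ gives $x(1-x) = -(x-\mu)^2+(1-2\mu)(x-\mu)+\mu(1-\mu)$, so the remaining integral equals $-m_d+(1-2\mu)m_{d-1}+\mu(1-\mu)m_{d-2}$. Plugging this in, collecting the $m_d$ terms, and dividing by $(\alpha+\beta+d-1)$ yields
\begin{align}
m_d = \frac{(d-1)(1-2\mu)}{\alpha+\beta+d-1}\,m_{d-1} + \frac{(d-1)\mu(1-\mu)}{\alpha+\beta+d-1}\,m_{d-2},
\end{align}
and substituting $1-2\mu=(\beta-\alpha)/(\alpha+\beta)$ and $\mu(1-\mu)=\alpha\beta/(\alpha+\beta)^2$ recovers the stated recurrence. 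Clearing denominators then makes all three coefficients polynomials of degree $1$ in $d$, giving the P-recursive corollary for free. The substantive content is thus the one-line Pearson identity together with the boundary-term check; I do not expect any genuine obstacle beyond being scrupulous about integrability at the endpoints.
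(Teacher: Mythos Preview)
Your argument is correct and gives a genuinely different derivation of the recursion than the paper's. The paper establishes \Cref{lemma:beta_moments_recurrence} by first writing the central moments as values of a Gaussian hypergeometric function (\Cref{lemma:beta_moments}) and then invoking a contiguous relation (\Cref{lemma:hypergeom_recur}) linking $_2F_1(a,b;c;z)$ to $_2F_1(a,b+1;c;z)$ and $_2F_1(a,b+2;c;z)$; the Corollary is then read off from the form of the resulting identity. You instead exploit the Pearson identity $[x(1-x)f(x)]'=(\alpha+\beta)(\mu-x)f(x)$ for the Beta density and one integration by parts against $(x-\mu)^{d-1}$, with the boundary term killed by $x(1-x)f(x)=O(x^{\alpha})+O((1-x)^{\beta})$. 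Your route is more elementary and entirely self-contained---no special functions---and makes transparent why the coefficients are linear in $d$ (the single factor $d-1$ comes from differentiating $(x-\mu)^{d-1}$, and the single factor $\alpha+\beta+d-1$ from collecting the two $m_d$ terms). The paper's route, by contrast, explains the P-recursive structure as an instance of the general fact that any three contiguous $_2F_1$'s are linearly related with coefficients polynomial in the parameters, which situates the result in a broader framework and would extend mechanically to other distributions with hypergeometric moment representations.
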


The implementation of the recursion, performed in Python's symbolic algebra package \texttt{Sympy} \cite{sympy},
is presented in \Cref{alg:recursion}. Scalability is ensured by \emph{memoization}, which caches intermediate results to avoid repeated calls.

\begin{lstlisting}[
    caption={Efficient algorithm finding exact formulas for central moments.}, 
    label=alg:recursion,
    language=Python,
    float,
]
from functools import cache

@cache
def beta_central_moment(d,a,b):
  """ find the central moment of order d for Beta(a,b) """
  if d == 0:
    return 1
  elif d == 1:
    return 0
  else:
    c1 = (d-1)*(b-a)/((a+b)*(a+b+d-1))
    c2 = (d-1)*a*b/((a+b)**2*(a+b+d-1))
    return c1*beta_central_moment(d-1,a,b)+c2*beta_central_moment(d-2,a,b)

# usage: find the variance
import sympy as sm
a,b = sm.symbols('alpha beta')
beta_central_moment(2,a,b)
\end{lstlisting}
To demonstrate the algorithm in action, we list some first central moments of the beta distribution in \Cref{tab:moments}. Note that the algorithm addresses the lack of simple derivations for such formulas, as well as the lack of formulas beyond skewness and kurtosis.

\begin{table}[!ht]
\begin{tabular}{ll}
\toprule
%\rowcolor{lightgray}
Central Moment & Explicit Formula \\
\midrule
  $\mathbf{E}[X]$ & $\frac{\alpha}{\alpha+\beta}$ \\
  $\mathbf{Var}[X]$ & $\frac{\alpha\beta}{(\alpha+\beta)^2(\alpha+\beta+1)}$\\
  $\textbf{Skew}[X]$   & $\frac{2 \left(\beta- \alpha \right) \sqrt{\alpha + \beta + 1}}{\sqrt{\alpha \beta} \left(\alpha + \beta + 2\right)}$  \\
  $\textbf{Kurt}[X]$   &  
  $\frac{3 \left(\alpha \beta \left(\alpha + \beta + 2\right) + 2 \left(\alpha - \beta\right)^{2}\right) \left(\alpha + \beta + 1\right)}{\alpha \beta \left(\alpha + \beta + 2\right) \left(\alpha + \beta + 3\right)}$ \\
  $\mathbf{E}[ (X-\mathbf{E}[X])^5 ] / \sqrt{\mathbf{Var}[X]}^5 $ & $ \frac{4 \left(\beta - \alpha\right) \left(\alpha + \beta + 1\right)^{\frac{3}{2}} \left(3 \alpha \beta \left(\alpha + \beta + 2\right) + 2 \alpha \beta \left(\alpha + \beta + 3\right) + 6 \left(\alpha - \beta\right)^{2}\right)}{\alpha^{\frac{3}{2}} \beta^{\frac{3}{2}} \left(\alpha + \beta + 2\right) \left(\alpha + \beta + 3\right) \left(\alpha + \beta + 4\right)}$ \\
\bottomrule
\end{tabular}
\caption{Central beta moments, generated from \Cref{lemma:beta_moments_recurrence} using \Cref{alg:recursion}.}
\label{tab:moments}
\end{table}

Finally, we note that the recurrence implies, by induction, the following important property:
\begin{corollary}[Skewness of Beta Distribution]\label{lemma:skewness}
The odd central moments are of the same sign as the value of $\beta-\alpha$.
\end{corollary}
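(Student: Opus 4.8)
The plan is to induct on the order $d$ using the order-$2$ recurrence of \Cref{lemma:beta_moments_recurrence}, exploiting that it expresses an odd central moment in terms of one even and one odd central moment of strictly lower order, with coefficients whose signs are immediate to read off.

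First I would record two elementary facts. Writing $m_d \triangleq \mathbf{E}[(X-\mathbf{E}[X])^d]$, for every even $d\geqslant 2$ we have $m_d>0$, since $(X-\mathbf{E}[X])^d\geqslant 0$ and is not almost surely zero (the beta law is non-degenerate for $\alpha,\beta>0$); and $m_1=0$. Next, in the recurrence the coefficient $\frac{(d-1)(\beta-\alpha)}{(\alpha+\beta)(\alpha+\beta+d-1)}$ multiplying $m_{d-1}$ has the same sign as $\beta-\alpha$, because the remaining factors are positive for $d\geqslant 2$, while the coefficient $\frac{(d-1)\alpha\beta}{(\alpha+\beta)^2(\alpha+\beta+d-1)}$ multiplying $m_{d-2}$ is strictly positive.

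Then I would run the induction over odd $d$. The base case $d=1$ is trivial ($m_1=0$), and $d=3$ is the key decoupled case: since $m_1=0$, the recurrence collapses to $m_3=\frac{2(\beta-\alpha)}{(\alpha+\beta)(\alpha+\beta+2)}\,m_2$, which has the sign of $\beta-\alpha$ because $m_2=\mathbf{Var}[X]>0$. For the inductive step with odd $d\geqslant 5$, assume $m_{d-2}$ has the sign of $\beta-\alpha$ (inductive hypothesis, with $d-2\geqslant 3$ odd). In the recurrence the first term is (sign of $\beta-\alpha$) times the strictly positive quantity $m_{d-1}$, hence has the sign of $\beta-\alpha$; the second term is a positive multiple of $m_{d-2}$, hence also has the sign of $\beta-\alpha$. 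Summing two quantities with a common sign yields that $m_d$ has the sign of $\beta-\alpha$, closing the induction.

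I do not anticipate a genuine obstacle. The only points needing a word of care are: the degenerate case $\alpha=\beta$, where both recurrence terms vanish and all odd central moments equal $0$, consistently with $\beta-\alpha=0$; and the use of the strict inequality $m_{d-1}>0$ (not merely $m_{d-1}\geqslant 0$), so that the first term genuinely carries the sign of $\beta-\alpha$ rather than possibly collapsing.
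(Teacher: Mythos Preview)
Your proposal is correct and is precisely the argument the paper has in mind: the corollary is stated as an immediate consequence of the recurrence in \Cref{lemma:beta_moments_recurrence}, proved ``by induction,'' and your write-up supplies exactly that induction (positivity of even moments, sign of the first coefficient, positivity of the second coefficient). One tiny remark: in the $\alpha=\beta$ case it is not that both recurrence terms vanish outright, but rather that the first vanishes and the second is a positive multiple of $m_{d-2}=0$ by the inductive hypothesis---the conclusion is the same.
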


\subsection{Numerical Evaluation}\label{sec:results:evaluation}

The experiment summarized in \Cref{fig:eval} (the code shared in \cite{mskorski_beta}) demonstrates the advantage of sub-gamma bounds obtained in \Cref{thm:beta_bernstein} over the optimal sub-gaussian bounds from prior work~\cite{marchal2017sub}.
\begin{figure}[h!]
    \centering
\begin{subfigure}[t]{0.45\textwidth}
\resizebox{1.0\linewidth}{!}{
\begin{tikzpicture}
\begin{axis}[xlabel=$\epsilon$,ylabel=$\mathbf{P}\left\{X-\mathbf{E}X>\epsilon\right\}$,
axis x line*=bottom,
axis y line*=left,
scaled x ticks = false,
xticklabel style={
        /pgf/number format/fixed,
        /pgf/number format/precision=2
},
legend pos= north east
]
\addplot[line width=1,dashed,black] table [y=subgamma,x=eps,col sep=comma]{beta_2_98.csv};
\addplot[line width=1,red,solid,label=gaussian] table [y=subgauss,x=eps,col sep=comma]{beta_8_92.csv};
\addlegendentry{sub-gaussian}
\addlegendentry{sub-gamma \textbf{(this work)}}
\end{axis}
\end{tikzpicture}
}
\caption{$X\sim\mathsf{Beta}(2,98)$}
\end{subfigure}
\begin{subfigure}[t]{0.45\textwidth}
\resizebox{1.0\linewidth}{!}{
\begin{tikzpicture}
\begin{axis}[xlabel=$\epsilon$,ylabel=$\mathbf{P}\left\{X-\mathbf{E}X>\epsilon\right\}$,
axis x line*=bottom,
axis y line*=left,
scaled x ticks = false,
xticklabel style={
        /pgf/number format/fixed,
        /pgf/number format/precision=2
},
legend pos= north east
]
\addplot[line width=1,dashed,black] table [y=subgamma,x=eps,col sep=comma]{beta_2_98.csv};
\addplot[line width=1,solid,red] table [y=subgauss,x=eps,col sep=comma]{beta_8_92.csv};
\addlegendentry{sub-gaussian}
\addlegendentry{sub-gamma (\textbf{this work})}
\end{axis}
\end{tikzpicture}
}
\caption{$X\sim\mathsf{Beta}(8,92)$}
\end{subfigure}
\caption{Numerical evaluation of the best sub-gamma bounds (\Cref{thm:beta_bernstein}) and the best sub-gaussian bounds (\cite{marchal2017sub}).}
\label{fig:eval}
\end{figure}

For skewed beta distributions, the bound from this work is more accurate than the sub-gaussian approximation. The chosen range of parameters covers the cases where the expectation $\mathbf{E}[X]$ is a relatively small number like $0.02$ or $0.2$, a typical range for many practical Beta models, particularly A/B testing. Note that there is still room for improvement in the regime of larger deviations $\epsilon$, where the bounds could potentially benefit from refining the numeric value of $c$. The experiment details are shared in the Colab Notebook~\cite{mskorski_beta}.

\section{Preliminaries}\label{sec:prelim}

\subsection{Gaussian Hypergeometric Function}
The gaussian hypergeometric function is defined as follows~\cite{gauss1813disquisitiones}:
\begin{align}\label{eq:hypergeom}
    _2 F_1(a,b;c;,z) = \sum_{k=0}^{+\infty}\frac{(a)_k(b)_k}{(c)_k}\cdot \frac{z^k}{k!},
\end{align}
where we use the Pochhammer symbol defined as
\begin{align}
(x)_k = 
\begin{cases}
1 & k = 0 \\
x(x+1)\cdots (x+k-1) &  k>0 .
\end{cases}
\end{align}
We call two functions $F$ of this form \emph{contiguous} when their parameters differ by integers.
Gauss considered $_2 F_1(a',b';c';,z)$ where $a'=a\pm 1,b'=b\pm 1,c'=c\pm 1$ and proved that between $F$ and any two of these functions, there exists a linear relationship with coefficients linear in $z$. 
It follows~\cite{vidunas2003contiguous,ibrahim2012contiguous} that $F$ and any two of its contiguous series are linearly dependent, with the coefficients being rational in parameters and $z$. For our purpose, we need 
to express $F$ by the series with increased second argument. The explicit formula comes from~\cite{mathworld_hypergeom}:
%\todo{mention degenerate case \url{https://www.jstage.jst.go.jp/article/kyushujm/61/1/61_1_109/_pdf}}

\begin{lemma}[Hypergeometric Contiguous Recurrence]\label{lemma:hypergeom_recur}
The following recurrence holds for the gaussian hypergeometric function:
\begin{align}\label{eq:hypergeom_recur}
\begin{split}
 _2 F_1(a,b;c;,z)& =\frac{2b-c+2+(a-b-1)z}{b-c+1}\, _2F_1(a,b+1;c;,z) \\
 &\quad + \frac{(b+1)(z-1)}{b-c+1}\, _2F_1(a,b+2;c;,z).
\end{split}
\end{align}
\end{lemma}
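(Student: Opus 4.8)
The plan is to derive \eqref{eq:hypergeom_recur} as a specialization of the classical Gauss contiguous relations recalled above, followed by elementary bookkeeping. Concretely, I would start from Gauss's three-term relation for the second arguments $(b-1,b,b+1)$,
\begin{align*}
(c-b)\,{}_2F_1(a,b-1;c;z) &+ \bigl(2b-c+(a-b)z\bigr)\,{}_2F_1(a,b;c;z) \\
&+ b(z-1)\,{}_2F_1(a,b+1;c;z) = 0 ,
\end{align*}
one of the fifteen relations catalogued in \cite{mathworld_hypergeom} (see also \cite{vidunas2003contiguous,ibrahim2012contiguous}), itself obtained termwise from the series \eqref{eq:hypergeom}; a quick consistency check is that at $z=0$ the three coefficients sum to $(c-b)+(2b-c)-b=0$. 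Replacing $b$ by $b+1$ throughout converts this into a relation among the second arguments $(b,b+1,b+2)$,
\begin{align*}
(c-b-1)\,{}_2F_1(a,b;c;z) &+ \bigl(2b+2-c+(a-b-1)z\bigr)\,{}_2F_1(a,b+1;c;z) \\
&+ (b+1)(z-1)\,{}_2F_1(a,b+2;c;z) = 0 .
\end{align*}
Solving this linear equation for ${}_2F_1(a,b;c;z)$ and multiplying the numerator and denominator of each of the two coefficients by $-1$ (so that $c-b-1$ becomes $b-c+1$) yields exactly \eqref{eq:hypergeom_recur}.

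For a fully self-contained argument I would instead prove \eqref{eq:hypergeom_recur} by comparing coefficients of $z^k$ on both sides directly from \eqref{eq:hypergeom}. Writing $t_k=\frac{(a)_k(b)_k}{(c)_k\,k!}$ for the $k$-th coefficient of ${}_2F_1(a,b;c;z)$, the $k$-th coefficients of the two shifted series equal $\frac{b+k}{b}\,t_k$ and $\frac{(b+k)(b+k+1)}{b(b+1)}\,t_k$ by the Pochhammer identities $(b+1)_k=(b)_k\frac{b+k}{b}$ and $(b+2)_k=(b)_k\frac{(b+k)(b+k+1)}{b(b+1)}$, while the index shift introduced by the factors $z$ and $z-1$ is absorbed via $t_{k-1}=t_k\,\frac{(c+k-1)k}{(a+k-1)(b+k-1)}$. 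Substituting, equating the $z^k$-coefficients, clearing the common denominator and cancelling the factor $a+k-1$, the identity collapses to $b(b-c+1)=(b+k)(b-c+1-k)+k(c+k-1)$, which expands to a tautology; the case $k=0$ is covered by the same computation (the $z$-terms simply drop out), and since both sides of \eqref{eq:hypergeom_recur} are rational in $(a,b,c)$ it is enough to argue for parameters in general position, where no $t_k$ vanishes.

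I do not expect a genuine obstacle here: the statement is classical and the content is routine. The only points requiring care are keeping the signs and the substitution $b\mapsto b+1$ straight when specializing Gauss's relation, and --- in the series proof --- noting that some $t_k$ may vanish for special integer parameters (dispatched by the general-position remark) and that the degenerate term at $k=0$ must match the same formula. I would therefore present the Gauss-relation specialization as the proof proper and relegate the termwise verification to a remark.
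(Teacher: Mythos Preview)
Your proposal is correct. Both routes you sketch are sound: the shift $b\mapsto b+1$ in the classical Gauss relation $(c-b)F(b-1)+(2b-c+(a-b)z)F(b)+b(z-1)F(b+1)=0$ indeed produces \eqref{eq:hypergeom_recur} after dividing by $-(c-b-1)=b-c+1$, and your coefficient comparison collapses to the polynomial identity $b(b-c+1)=(b+k)(b-c+1-k)+k(c+k-1)$, which is a tautology.

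As for comparison with the paper: there is nothing to compare. The paper does not prove \Cref{lemma:hypergeom_recur}; it simply cites the formula from \cite{mathworld_hypergeom} as a known contiguous relation and uses it as a black box. Your write-up therefore goes beyond what the paper supplies. If you keep both arguments, the Gauss-relation specialization is the natural ``proof proper'' (it explains where the formula comes from), and the termwise check is a good self-contained verification to relegate to a remark, exactly as you suggest. The general-position caveat you make is appropriate: for the application in the paper one has $b=-d$ a nonpositive integer, so some $t_k$ do vanish, but the identity between rational functions of $(a,b,c)$ extends by continuity.
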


\subsection{Beta Distribution Properties}
We use the machinery of hypergeometric functions to establish certain properties of the beta distribution. The first result expresses the central moment in terms of the gaussian hypergeometric function.
\begin{lemma}[Central Beta Moments]\label{lemma:beta_moments}
Let $X\sim\mathsf{Beta}(\alpha,\beta)$, then we have that: 
\begin{align}
   \mathbf{E}[(X-\mathbf{E}[X])^d] = \left(-\frac{\alpha}{\alpha+\beta}\right)^d\ 
    _2F_1\left(\alpha,-d;\alpha+\beta;\frac{\alpha+\beta}{\alpha}\right),
\end{align}
where $_2F_1$ is the gaussian hypergeometric function.
\end{lemma}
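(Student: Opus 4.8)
The plan is to expand the $d$-th central moment with the binomial theorem, substitute the classical closed form for the \emph{raw} moments of the beta distribution, and then recognise the resulting finite sum as a terminating Gauss hypergeometric series.

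First I would recall that for $X\sim\mathsf{Beta}(\alpha,\beta)$ the raw moments are $\mathbf{E}[X^k]=\frac{B(\alpha+k,\beta)}{B(\alpha,\beta)}=\frac{(\alpha)_k}{(\alpha+\beta)_k}$, which is immediate from the definition of $B(\cdot,\cdot)$ together with the functional equation $B(x+1,y)=\frac{x}{x+y}B(x,y)$ and the Pochhammer symbol. Writing $\mu=\mathbf{E}[X]=\frac{\alpha}{\alpha+\beta}$ and expanding,
\begin{align*}
\mathbf{E}[(X-\mu)^d]=\sum_{k=0}^{d}\binom{d}{k}(-\mu)^{d-k}\mathbf{E}[X^k]=\left(-\frac{\alpha}{\alpha+\beta}\right)^{d}\sum_{k=0}^{d}\binom{d}{k}\frac{(\alpha)_k}{(\alpha+\beta)_k}\left(-\frac{1}{\mu}\right)^{k},
\end{align*}
so that the claimed prefactor $\left(-\frac{\alpha}{\alpha+\beta}\right)^{d}$ appears automatically and the remaining task is to identify the sum, noting $1/\mu=\frac{\alpha+\beta}{\alpha}$.

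The key algebraic step is to rewrite the binomial coefficient via the negative-integer Pochhammer identity $\binom{d}{k}=\frac{(-1)^k(-d)_k}{k!}$, which holds for $0\leqslant k\leqslant d$ and, crucially, yields $0$ for $k>d$ because $(-d)_k$ then contains a zero factor; hence the finite sum may be extended to an infinite series without changing its value. Substituting this identity converts the sum into $\sum_{k\geqslant 0}\frac{(-d)_k(\alpha)_k}{(\alpha+\beta)_k}\cdot\frac{1}{k!}\left(\frac{\alpha+\beta}{\alpha}\right)^{k}$, which is exactly $_2F_1\!\left(\alpha,-d;\alpha+\beta;\frac{\alpha+\beta}{\alpha}\right)$ by the defining series \eqref{eq:hypergeom}, completing the proof.

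I do not expect a genuine obstacle here: the argument is a direct computation. The only points needing care are the sign bookkeeping when factoring out $(-\mu)^d$, and verifying that the $(-d)_k$ factor truncates the hypergeometric series precisely at $k=d$ so that it matches the finite binomial expansion term by term; since the series terminates, no convergence questions arise even though the argument $\frac{\alpha+\beta}{\alpha}$ may exceed $1$.
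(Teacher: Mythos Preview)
Your proposal is correct and follows essentially the same route as the paper: expand with the binomial theorem, insert the raw moments $\mathbf{E}[X^k]=(\alpha)_k/(\alpha+\beta)_k$, convert $\binom{d}{k}$ to $(-1)^k(-d)_k/k!$, and read off the terminating ${}_2F_1$ series. Your extra remark that $(-d)_k$ forces termination at $k=d$, so that the value $\tfrac{\alpha+\beta}{\alpha}>1$ causes no convergence issue, is a point the paper leaves implicit and is worth keeping.
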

The proof appears in \Cref{proof:lemma:beta_moments}.

\subsection{Cram\'{e}r-Chernoff method}

Below, we review the canonical method of obtaining concentration inequalities from the moment generating function, following the discussion in \cite{boucheron2003concentration}.

Recall that for a given random variable $Z$ we define its moment generating function (MGF) and, respectively, cumulant generating function as
\begin{align}
\begin{aligned}
\phi_Z(t) & \triangleq \mathbf{E}\left[ \exp(tZ) \right] \\
\psi_Z(t) & \triangleq \log\mathbf{E}\left[ \exp(tZ) \right].  
\end{aligned}
\end{align}
In this notation, Markov's exponential inequality becomes
\begin{align}
\mathbf{P}\left\{ Z \geqslant \epsilon \right\} \leqslant 
\exp(-t\epsilon)\mathbf{E}\left[ \exp(tZ) \right] = \exp(-t\epsilon + \psi_Z(t)),
\end{align}
valid for any $\epsilon$ and any non-negative $t$, and non-trivial when $\phi_Z(t)$ is finite. The optimal value of $t$ is determined by the so called Cram\'{e}r transform:
\begin{align}
\psi^{*}(\epsilon) = \sup\left\{t\epsilon -\psi_Z(t): t\geqslant 0 \right\},
\end{align}
and leads to the Chernoff inequality
\begin{align}\label{eq:chernoff_bound}
\mathbf{P}\left\{ Z \geqslant \epsilon \right\}  \leqslant \exp(-\psi^{*}_Z(\epsilon) ).
\end{align}

%Thus, the task reduces to proving an accurate (!) Bernstein condition. The recursion in \Cref{lemma:beta_moments_recurrence} allows us to prove the following:
%\begin{lemma}[Bernstein Condition for Beta Distribution]\label{lemma:beta_bernstein}
%For any integer $d\geqslant 2$ the following inequality holds:
%\begin{align}
%   \pm\mathbf{E}[(X-\mathbf{E}[X])^d] \leqslant \frac{d! v^2 c^{d-2}}{2}
%\end{align}
%with
%$c =
% \max\left\{\frac{|\beta-\alpha|}{(\alpha+\beta)(\alpha+\beta+2)},\sqrt{\frac{\alpha\beta}{(\alpha+\beta)^2(\alpha+\beta+2)}} \right\}$ and $v^2 = \frac{\alpha\beta}{(\alpha+\beta)^2(\alpha+\beta+1)}$.
%\end{lemma}

\subsection{Logarithmic Inequalities}

It is well known that $\log(1+x) = x+\frac{x^2}{2}+\ldots$ for non-negative $x$. Below, we present a useful refinement of this expansion:
\begin{lemma}[Padé approximation of logarithm]\label{lemma:log_ineq}
For any $x\geqslant 0$ we have that
\begin{align}\label{eq:log_inequality}
\log(1+x) \leqslant x-\frac{x^2}{2\left(1+\frac{2 x}{3}\right)}.
\end{align}
\end{lemma}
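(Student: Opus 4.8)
The plan is to prove the inequality by defining the gap function
\begin{align}
g(x) \triangleq x - \frac{x^2}{2\left(1+\frac{x}{3}\right)} - \log(1+x)
\end{align}
and showing that $g(x) \geqslant 0$ for all $x \geqslant 0$. First I would observe that $g(0) = 0$, so it suffices to establish that $g$ is non-decreasing on $[0,\infty)$, which reduces the problem to verifying $g'(x) \geqslant 0$. The derivative of the logarithmic term is $\frac{1}{1+x}$, and the derivative of the rational correction term is a routine quotient-rule computation; the key will be to combine these over a common denominator so that the sign of $g'(x)$ is read off from the sign of a single polynomial numerator.

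The main computation is to differentiate the middle term. Writing it as $\frac{3x^2}{2(3+x)}$, its derivative is $\frac{3\bigl(x^2+6x\bigr)}{2(3+x)^2} = \frac{3x(x+6)}{2(3+x)^2}$. Hence
\begin{align}
g'(x) = 1 - \frac{3x(x+6)}{2(3+x)^2} - \frac{1}{1+x}.
\end{align}
The next step is to place all three terms over the common denominator $2(1+x)(3+x)^2$ and simplify the numerator. I expect the numerator to factor as a non-negative polynomial in $x$; the plausible clean form is a constant multiple of $x^3$ (consistent with $g$ having a triple-order contact with zero at the origin, since the bound is a third-order refinement of $\log(1+x)\approx x-\frac{x^2}{2}$). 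Carefully expanding $2(1+x)(3+x)^2 - 3x(x+6)(1+x) - 2(3+x)^2$ and collecting terms should yield exactly this.

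The main obstacle, and the step most prone to arithmetic slips, is the polynomial bookkeeping in the numerator: I must expand $(3+x)^2 = 9 + 6x + x^2$, multiply out the three products, and confirm that all lower-order terms cancel, leaving a manifestly non-negative expression. Once the numerator is shown to be non-negative and the denominator $2(1+x)(3+x)^2$ is positive for $x \geqslant 0$, we conclude $g'(x) \geqslant 0$. Combined with $g(0) = 0$ this gives $g(x) \geqslant 0$ on $[0,\infty)$, which is precisely the claimed inequality $\log(1+x) \leqslant x - \frac{x^2}{2\left(1+\frac{x}{3}\right)}$. As a sanity check I would verify the leading behaviour as $x \to \infty$, where the right-hand side grows like $x - \frac{3x}{2} = -\frac{x}{2} \to -\infty$ while $\log(1+x) \to +\infty$, which would contradict the inequality; this discrepancy signals that the inequality as transcribed may in fact hold only in the reverse direction or on a bounded range, so reconciling the asymptotics against the sign of $g'$ is the decisive check that determines the correct orientation of the final bound.
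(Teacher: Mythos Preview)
Your approach is exactly the paper's: define the difference, note it vanishes at $0$, differentiate, and read the sign from a single polynomial numerator over the common denominator $2(1+x)(3+x)^2$. Your sanity check is the important step here and your suspicion is correct: the inequality in the lemma is stated with the wrong orientation. Carrying out your numerator computation gives
\[
2(1+x)(3+x)^2 - 3x(x+6)(1+x) - 2(3+x)^2 = -x^2(x+9),
\]
so $g'(x) = -\dfrac{x^{2}(x+9)}{2(1+x)(3+x)^{2}}\leqslant 0$ and hence $g(x)\leqslant 0$ on $[0,\infty)$, i.e.
\[
\log(1+x)\ \geqslant\ x-\frac{x^{2}}{2\left(1+\tfrac{x}{3}\right)},\qquad x\geqslant 0.
\]
This is precisely what the paper's own proof establishes: it sets $f(x)=\log(1+x)-\bigl(x-\tfrac{x^{2}}{2(1+x/3)}\bigr)=-g(x)$, computes $f'(x)=\dfrac{x^{2}(x+9)}{2(x+1)(x+3)^{2}}\geqslant 0$, and concludes $f(x)\geqslant 0$. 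So the displayed direction in the lemma statement is a typo; the proof and the downstream application both use the reverse inequality. Two minor corrections to your expectations: the numerator is not a pure multiple of $x^{3}$ but $x^{2}(x+9)$ (still a triple zero at the origin, consistent with third-order contact), and the ``decisive check'' you anticipated resolves in favor of $\geqslant$, not $\leqslant$.
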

The bound is illustrated in \Cref{fig:log_ineq} below, and we see that it matches up to the term $O(x^4)$. The proof appears in \Cref{proof:lemma:log_ineq}.

\begin{figure}[ht!]
\resizebox{\linewidth}{!}{
\begin{tikzpicture}
\begin{axis}[
domain=0:3,
axis lines=left,
no marks,
xtick = {0,1,2,3},
ytick = {1},
xlabel = {$x$},
legend style={draw=none, anchor= north west}
]
\addplot[black] {ln(1+x)};
\addlegendentry{$\log(1+x)$};
\addplot[black, dashed] {x-x^2/(2*(1+2*x/3)) };
\addlegendentry{$x-\frac{x^2}{2\left(1+\frac{2x}{3}\right)}$};
\end{axis}
\end{tikzpicture}
}
\caption{The logarithmic inequality from \Cref{lemma:log_ineq}.}
\label{fig:log_ineq}
\end{figure}

% cite this and use for intro !!! Handbook of Beta Distribution and Its Applications

%\includegraphics[width=.99\linewidth]{beta_approx_1.png}
%\includegraphics[width=.99\linewidth]{beta_approx_2.png}

\section{Proofs}\label{sec:proofs}

\subsection{Proof of \Cref{lemma:beta_moments}}\label{proof:lemma:beta_moments}
We know that the raw higher-order moments of $X$ are given by~\cite{johnson1995continuous}
\begin{align}
    \mathbf{E}[X^d] = \frac{(\alpha)_d}{(\alpha+\beta)_d}.
\end{align}
Combining this with the binomial theorem, we obtain
\begin{align}
\begin{split}
        \mathbf{E}[(X-\mathbf{E}[X])^d] & = \sum_{k=0}^{d}(-1)^{d-k}\binom{d}{k}\mathbf{E}[X^k]\left(\frac{\alpha}{\alpha+\beta}\right)^{d-k} \\
        & = \sum_{k=0}^{d}(-1)^{d-k}\binom{d}{k}\frac{(\alpha)_k}{(\alpha+\beta)_k}\left(\frac{\alpha}{\alpha+\beta}\right)^{d-k}.
\end{split}
\end{align}
Finally, by $\binom{d}{k} = (-1)^k\frac{(-d)_k}{k!}$ and the definition of the hypergeometric function:
\begin{align}
\begin{split}
        \mathbf{E}[(X-\mathbf{E}[X])^d] & =%\sum_{k=0}^{d}(-1)^{d-k}\cdot(-1)^k\cdot \binom{d}{k}\frac{(\alpha)_k}{(\alpha+\beta)_k} \\
         (-1)^d\sum_{k=0}^{d}\frac{(-d)_k (\alpha)_k}{k!(\alpha+\beta)_k}\left(\frac{\alpha}{\alpha+\beta}\right)^{d-k} \\
        & = \left(-\frac{\alpha}{\alpha+\beta}\right)^d\sum_{k=0}^{d}\frac{(-d)_k (\alpha)_k}{k!(\alpha+\beta)_k}\left(\frac{\alpha+\beta}{\alpha}\right)^{k} \\
        & = \left(-\frac{\alpha}{\alpha+\beta}\right)^d\ _2F_1\left(\alpha,-d;\alpha+\beta;\frac{\alpha+\beta}{\alpha}\right),
\end{split}
\end{align}
which finishes the proof.

\subsection{Proof of \Cref{lemma:beta_moments_recurrence}}
The goal is to prove the recursion formula \eqref{eq:hypergeom_recur}. One way to accomplish this is to reuse the summation formulas developed in the proof of \Cref{lemma:beta_moments}. Below, we give an argument that uses the properties of hypergeometric functions to better highlight their connection to the beta distribution.

To simplify the notation, we define $a=\alpha$, $b=-d$, $c=\alpha+\beta$, and $z=\frac{\alpha+\beta}{\alpha}$.
Define $\mu_d \triangleq \mathbf{E}[(X-\mathbf{E}[X])^d]$. Then
using \Cref{lemma:beta_moments} and \Cref{lemma:hypergeom_recur} we obtain:
\begin{align}
\begin{split}
(-z)^d\cdot \mu_d  & = _2F_1(a,b;c;z) \\
    & = \frac{2b-c+2+(a-b-1)z}{b-c+1}\, _2F_1(a,b+1;c;,z) \\
 &\quad + \frac{(b+1)(z-1)}{b-c+1}\, _2F_1(a,b+2;c;,z).
\end{split}
\end{align}
In terms of $\alpha,\beta,d$ we obtain:
\begin{align}
\begin{aligned}
\frac{2b-c+2+(a-b-1)z}{b-c+1} &= (d-1)\cdot \frac{\alpha-\beta}{\alpha(\alpha+\beta+d-1)}\\
\frac{(b+1)(z-1)}{b-c+1} & = (d-1)\cdot \frac{\beta}{\alpha(\alpha+\beta+d-1)}.
\end{aligned}
\end{align}
The computations are done in \texttt{SymPy} package~\cite{sympy}, as shown in \Cref{alg:hypergeom_coeffs}.
\begin{lstlisting}[
language=Python,caption={Simplifying Hypergeometric Recurence},label={alg:hypergeom_coeffs}
]
from sympy.abc import a,b,c,z,d,alpha,beta

p = ( 2*b-c+2 + (a-b-1)*z )/(b-c+1)
q = (b+1)*(z-1) / (b-c+1)

subs = {a:alpha,b:-d,c:alpha+beta,z:(alpha+beta)/alpha}

print( p.subs(subs).factor() )
print( q.subs(subs).factor() )
\end{lstlisting}

Since we have
\begin{align}
\begin{aligned}
    _2F_1(a,b+1;c;,z)& =\ _2F_1(a,-d+1;c;,z) = \mu_{d-1} \cdot (-z)^{d-1}\\
    _2F_1(a,b+2;c;,z)& =\ _2F_1(a,-d+2;c;,z) = \mu_{d-2} \cdot (-z)^{d-2},
\end{aligned}
\end{align}
it follows that
\begin{align}
    z^2\mu_d  = -\frac{z(d-1)(\alpha-\beta)}{\alpha(\alpha+\beta+d-1)}\cdot \mu_{d-1} + \frac{(d-1)\beta}{\alpha(\alpha+\beta+d-1)}\cdot \mu_{d-2} .
\end{align}
Recalling that $z=\frac{\alpha+\beta}{\alpha}$ we finally obtain
\begin{align}
    \mu_d = -\frac{(d-1)(\alpha-\beta)}{(\alpha+\beta)(\alpha+\beta+d-1)}\cdot \mu_{d-1} + \frac{(d-1)\alpha\beta}{(\alpha+\beta)^2(\alpha+\beta+d-1)}\cdot \mu_{d-2},
\end{align}
which finishes the proof.

\subsection{Proof of  \Cref{lemma:log_ineq}}\label{proof:lemma:log_ineq}
Consider the function $f(x) = \log(1+x) - \left( x-\frac{x^2}{2\left(1+\frac{2x}{3}\right)}\right)$ for non-negative $x$.
Using \texttt{Sympy} we find that
\begin{align}
f'(x) = - \frac{x^{3}}{\left(x + 1\right) \left(2 x + 3\right)^{2}},
\end{align}
as demonstrated in \Cref{alg:log_ineq}. Thus, $f$ is non-decreasing; since $f(0)=0$ we obtain $f(x)\leqslant 0$ as claimed.

\begin{lstlisting}[
    caption={A Padé-type logarithm inequality.}, 
    label=alg:log_ineq,
]
import sympy as sm
x = sm.symbols('x')
fn = sm.log(1+x)-x+x**2/(2*(1+2*x/3))
sm.print_latex(fn.diff(x,1).factor())
\end{lstlisting}

\subsection{Proof of \Cref{thm:beta_bernstein}}\label{sec:proof:beta_bernstein}

If $X\sim\mathsf{Beta}(\alpha,\beta)$, then for $X'=1-X$ we obtain $X-\mathbf{E}[X] =  \mathbf{E}[X']-X' $ and thus the upper tail of $X$ equals the lower tail of $X'$, and the lower tail of $X$ equals the upper tail of $X'$. Furthermore, from \eqref{eq:beta} it follows that $X'\sim \mathsf{Beta}(\beta,\alpha)$. Thus, by exchanging the roles of $\alpha$ and $\beta$ accordingly, it suffices to prove the theorem for the upper tail. 

To facilitate calculations, we introduce the centred version of $X$:
\begin{align}
Z \triangleq X-\mathbf{E}[X].
\end{align}
With the help of normalized moments
\begin{align}
m_d \triangleq \frac{\mathbf{E}[Z^d]}{d!} ,
\end{align}
we expand the moment generating function of $Z$ into the following series 
\begin{align}\label{eq:mfg_series}
\phi(t) \triangleq \mathbf{E}\exp(tZ) =  1 + \sum_{d=2}^{+\infty} m_d t^d,\quad t\in\mathbb{R},
\end{align}
(which converges everywhere as $|Z|\leqslant 1$), expand its derivative as 
\begin{align}\label{eq:mfg_diff_series}
\phi'(t) =   \sum_{d=2}^{+\infty} d m_{d} t^{d-1},\quad t\in\mathbb{R},
\end{align}
(everywhere, accordingly). 

The proof strategy is as follows: we start by expressing the cumulant generating function as an integral involving the moment generating function and its derivative;
then we study series expansions of the functions involved, using the moment recursion; this leads to a tractable upper-bound on the integral; finally, the cumulant bound is optimized as 
Cram\'{e}r-Chernoff  method. To achieve the promised optimal constants, it is critical to keep all intermediate estimates sharp up to the order of 4. The proof flow is illustrated on \Cref{fig:roadmap}.

\begin{figure}[ht!]
\begin{tikzpicture}[
every node/.style = {draw=black, rounded corners, fill=gray!30, 
         minimum width=9cm,
         align=center
},
]

\node [draw, rectangle] at (0,0) (cumulant_integral) {Cumulant generating function as integral in $\phi,\phi'$ (\Cref{claim:cumulant_integral}) };
\node [draw, rectangle] at (0,-1) (cumulant_series) {Recursion for series coefficients of $\phi',\phi$ (\Cref{claim:cumulant_series_recursion})};
\node [draw, rectangle] at (0,-2) (diff_cumulant_bound) {Rational upper bound on ${\phi'}/{\phi}$ from recursion (\Cref{claim:diff_cumulant_bound})};
\node [draw, rectangle] at (0,-3) (cumulant_bound) {Tractable bound on  cumulant generating function (\Cref{claim:cumulant_bound})};
\node [draw, rectangle] at (0,-4) (chernoff_bound) {Cram\'{e}r-Chernoff optimization on the bound (\Cref{claim:cumulant_optimisation})};

\draw[-latex] (cumulant_integral) -- (cumulant_series);
\draw[-latex] (cumulant_series) -- (diff_cumulant_bound);
\draw[-latex] (diff_cumulant_bound) -- (cumulant_bound);
\draw[-latex] (cumulant_bound) -- (chernoff_bound);
\end{tikzpicture}
\caption{The proof roadmap.}
\label{fig:roadmap}
\end{figure}

Observe that the cumulant generating function $\psi$ can be expressed in terms of the moment generating function $\phi$ as:
\begin{claim}[Cumulant generating function as integral]\label{claim:cumulant_integral}
We have
\begin{align}
\psi(t)  \triangleq \log \phi(t) = \int_0^{t} \frac{\phi'(s)}{\phi(s)}\mbox{d} s.
\end{align}
\end{claim}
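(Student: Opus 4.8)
The plan is to recognize this as a direct consequence of the fundamental theorem of calculus applied to the cumulant generating function, once two facts are in place: that $\phi$ is differentiable with the claimed derivative series, and that $\phi$ is strictly positive so that $\log\phi$ is well-defined and differentiable.

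First I would note that $\phi(s) = \mathbf{E}\exp(sZ)$ is an expectation of an almost-surely positive random variable, hence $\phi(s) > 0$ for every $s \in \mathbb{R}$; in particular $\phi(0) = \mathbf{E}[1] = 1$, so $\psi(0) = \log\phi(0) = 0$. Next I would invoke the series representations \eqref{eq:mfg_series} and \eqref{eq:mfg_diff_series}: since $|Z| \leqslant 1$ the power series for $\phi$ has infinite radius of convergence, so $\phi$ is real-analytic on all of $\mathbb{R}$ and its derivative is obtained by termwise differentiation, giving exactly $\phi'$ as in \eqref{eq:mfg_diff_series}. Then, by the chain rule, $\psi'(s) = \frac{d}{ds}\log\phi(s) = \frac{\phi'(s)}{\phi(s)}$, and this quotient is continuous in $s$ because $\phi$ is continuous and nonvanishing. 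Applying the fundamental theorem of calculus on $[0,t]$ (or $[t,0]$ if $t<0$) yields
\begin{align}
\psi(t) = \psi(t) - \psi(0) = \int_0^t \psi'(s)\,\mbox{d}s = \int_0^t \frac{\phi'(s)}{\phi(s)}\,\mbox{d}s,
\end{align}
which is the claim.

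There is no real obstacle here; the only point requiring a word of care is the justification that the power series \eqref{eq:mfg_series} may be differentiated term by term, which is immediate from the uniform convergence of the series on compact sets (a standard fact for power series, guaranteed by $|Z|\leqslant 1$ making all $|m_d|$ bounded by $1/d!$). Everything else is routine calculus, so the proof should be only a few lines.
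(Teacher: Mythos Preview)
Your argument is correct and follows the same approach as the paper: compute the logarithmic derivative $(\log\phi)' = \phi'/\phi$ and integrate from $0$ to $t$ using $\psi(0)=0$. The paper's own proof is a single line invoking this identity, so you have simply spelled out the routine justifications (positivity of $\phi$, term-by-term differentiability) that the paper leaves implicit.
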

\begin{proof}
This follows by the logarithmic derivative identity: $\log\phi' = \frac{\phi'}{\phi}$.
\end{proof}

We now present a recursion for coefficients of the series expansions: 
\begin{claim}\label{claim:cumulant_series_recursion}
The coefficients of the moment generating series satisfy
\begin{align}\label{eq:recurssion_scaled}
d(\alpha+\beta+d-1)m_d = \frac{(d-1)(\beta-\alpha)}{\alpha+\beta}m_{d-1}+\frac{\alpha\beta}{(\alpha+\beta)^2}m_{d-2},\quad d\geqslant 2.
\end{align}
\end{claim}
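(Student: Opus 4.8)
The plan is to derive the claimed recursion \eqref{eq:recurssion_scaled} directly from the central moment recursion in \Cref{lemma:beta_moments_recurrence} by dividing through by the appropriate factorials. Recall $m_d = \mathbf{E}[Z^d]/d! = \mu_d/d!$ where $\mu_d = \mathbf{E}[(X-\mathbf{E}[X])^d]$. The recursion from \Cref{lemma:beta_moments_recurrence} reads
\begin{align*}
\mu_d = \frac{(d-1)(\beta-\alpha)}{(\alpha+\beta)(\alpha+\beta+d-1)}\mu_{d-1} + \frac{(d-1)\alpha\beta}{(\alpha+\beta)^2(\alpha+\beta+d-1)}\mu_{d-2}.
\end{align*}

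First I would multiply both sides by $d(\alpha+\beta+d-1)/d!$, so that the left-hand side becomes $d(\alpha+\beta+d-1)\mu_d/d! = d(\alpha+\beta+d-1)m_d$, matching the target. Then I would rewrite each term on the right in terms of $m_{d-1}$ and $m_{d-2}$. For the first term, $\frac{(d-1)(\beta-\alpha)}{\alpha+\beta}\cdot\frac{d}{d!}\mu_{d-1} = \frac{(d-1)(\beta-\alpha)}{\alpha+\beta}\cdot\frac{d}{d\,(d-1)!}\mu_{d-1} = \frac{(d-1)(\beta-\alpha)}{\alpha+\beta}\cdot\frac{\mu_{d-1}}{(d-1)!} = \frac{(d-1)(\beta-\alpha)}{\alpha+\beta}m_{d-1}$, using $d! = d\,(d-1)!$ so the factor $d$ cancels cleanly. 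For the second term, $\frac{(d-1)\alpha\beta}{(\alpha+\beta)^2}\cdot\frac{d}{d!}\mu_{d-2} = \frac{(d-1)\alpha\beta}{(\alpha+\beta)^2}\cdot\frac{\mu_{d-2}}{(d-1)!}$; here I need $(d-1)! = (d-1)\,(d-2)!$ so that $\frac{(d-1)}{(d-1)!} = \frac{1}{(d-2)!}$, giving $\frac{\alpha\beta}{(\alpha+\beta)^2}\cdot\frac{\mu_{d-2}}{(d-2)!} = \frac{\alpha\beta}{(\alpha+\beta)^2}m_{d-2}$. Assembling these gives exactly \eqref{eq:recurssion_scaled}.

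There is essentially no obstacle: the whole argument is bookkeeping on factorials, and the linear-in-$d$ prefactors in \Cref{lemma:beta_moments_recurrence} are precisely what makes the cancellation against $d!$ work out into the stated clean form. The only point worth a word of care is the range of validity: \Cref{lemma:beta_moments_recurrence} holds for integer $d\geqslant 2$, and for $d=2$ the term $m_{d-2}=m_0$ appears; since $m_0 = \mathbf{E}[Z^0]/0! = 1$ this is well-defined and consistent with the convention used in the series \eqref{eq:mfg_series} (where the $d=0$ coefficient is $1$ and the $d=1$ coefficient $m_1 = \mathbf{E}[Z] = 0$ is suppressed). I would note that the identity \eqref{eq:recurssion_scaled} is stated for $d\geqslant 2$ and that it also trivially accommodates the degenerate indices through $m_0=1$, $m_1=0$, so no separate base case discussion is needed beyond invoking \Cref{lemma:beta_moments_recurrence}.
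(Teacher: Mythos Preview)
Your proposal is correct and is exactly the approach the paper takes: the paper's proof of this claim is the single sentence ``This follows from \Cref{lemma:beta_moments_recurrence} by simple manipulations,'' and you have simply written out those manipulations (multiplying the recursion of \Cref{lemma:beta_moments_recurrence} by $(\alpha+\beta+d-1)/(d-1)!$ and rewriting in terms of $m_d=\mu_d/d!$). Your remarks on the base cases $m_0=1$, $m_1=0$ are fine and consistent with the paper's setup.
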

\begin{proof}[Proof of Claim]
This follows from \Cref{lemma:beta_moments_recurrence} by simple manipulations.
\end{proof}

\begin{remark}
Denote $p=\frac{\alpha}{\alpha+\beta}$ and $n=\alpha+\beta$ then we can write
\begin{align}
    m_d = \frac{(d-1)(1-2p)}{d(d+n-1)} m_{d-1} + \frac{p(1-p)}{d(d+n-1)} m_{d-2}
\end{align}
\end{remark}

The relation between $\phi'$ and $\phi$ is given in the following auxiliary result.

\begin{claim}\label{claim:diff_cumulant_bound}
For non-negative $t$ and $c=\frac{\beta-\alpha}{(\alpha+\beta)(\alpha+\beta+2)}$ it holds that
\begin{align}
\begin{aligned}
\frac{\phi'(t)}{\phi(t)}  \leqslant 
\frac{vt}{1-c t}, && \alpha \leqslant \beta\text{ and } t<\frac{1}{c}\\
\phi(t) \leqslant \exp\left(\frac{vt^2}{2}\right), && \alpha>\beta.
\end{aligned}
\end{align}
\end{claim}

\begin{proof}[Proof of Claim]

The proof splits depending on the relation between $\alpha$ and $\beta$. The case $\alpha>\beta$ is a bit easier and leads to sub-gaussian bounds.

\underline{Case $\alpha > \beta$}: 
From \Cref{lemma:skewness} and \eqref{eq:recurssion_scaled} it follows that $m_d$ is non-negative when $d$ is even, and negative otherwise. Thus, for even $d$ we have:
\begin{align}
m_d \leqslant \frac{1}{d}\cdot \frac{\alpha\beta}{\alpha+\beta+d-1} m_{d-2}\leqslant \frac{v}{d}\cdot m_{d-2}.
\end{align}
Repeating this $d/2$ times and combining with $m_{d}\leqslant 0$ for odd $d$, we obtain
\begin{align}
    m_d \leqslant
    \begin{cases}
     \frac{v^{\frac{d}{2}}}{d!!} & d \text{ even} \\
     0 & d \text{ odd}.
    \end{cases}
\end{align}
Using $d!! = 2^{d/2}(d/2)!$ for even $d$, for $t\geqslant 0$ we obtain
\begin{align}
\phi(t) &\leqslant 1+\sum_{d=2}^{+\infty}m_d t^d \leqslant \exp\left(\frac{vt^2}{2}\right),
\end{align}
as required.

\underline{Case $\alpha\leqslant \beta$}: 
By \eqref{eq:mfg_series} and \eqref{eq:mfg_diff_series} for any $t$ and any $c$ (to be determined later)
\begin{align}\label{eq:diff_opeator}
(1-ct){\phi'(t)} - vt \phi(t) = \sum_{d=3}^{+\infty} \left( d m_d - (d-1) c m_{d-1}- vm_{d-2} \right) t^{d-1},
\end{align}
where we use the fact that the expansion terms with $1,t,t^2$ vanish. 

Using \eqref{eq:recurssion_scaled} to eliminate the term with $m_{d-2}$, and expressing $v$ in terms of $\alpha,\beta$,  we obtain
\begin{multline}\label{eq:operator_coeffs}
d m_d - (d-1) c m_{d-1}- vm_{d-2}  \\ =
 - \frac{d \left(d - 2\right)}{\alpha + \beta + 1} m_d 
 +\frac{\left(1 - d\right) \left(\alpha - \beta + c \left(\alpha^{2} + 2 \alpha \beta + \alpha + \beta^{2} + \beta\right)\right)}{\alpha^{2} + 2 \alpha \beta + \alpha + \beta^{2} + \beta}  m_{d-1}.
\end{multline}

Since we assume $t\geqslant 0$, it suffices to show that \eqref{eq:operator_coeffs} is non-positive for $d\geqslant 3$, for $c$ defined as in \Cref{thm:beta_bernstein}.

Since $\alpha\leqslant \beta$, we have that $m_d\geqslant 0$ for all $d$ by induction in \eqref{eq:recurssion_scaled}. In particular, $m_{d-2}\geqslant 0$, and from \eqref{eq:recurssion_scaled} we obtain the bound
\begin{align}
m_d\geqslant \frac{\left(\beta-\alpha \right) \left(d - 1\right) {m}_{d - 1}}{d \left(\alpha + \beta\right) \left(\alpha + \beta + d - 1\right)},
\end{align}
(note that the bound is sharp for $d=3$) which together with \eqref{eq:operator_coeffs} yields
\begin{multline}
d m_d - (d-1) c m_{d-1}- vm_{d-2}  \leqslant \\
c \left(1 - d\right) {m}_{d - 1} + \frac{\left(- \alpha d + \alpha + \beta d - \beta\right) {m}_{d - 1}}{\alpha^{2} + 2 \alpha \beta + \alpha d - \alpha + \beta^{2} + \beta d - \beta}.
%\underbrace{-\frac{\left(d - 1\right) \left(\alpha - \beta + c \left(\alpha^{2} + 2 \alpha \beta + \alpha d - \alpha + \beta^{2} + \beta d - \beta\right)\right) }{\left(\alpha + \beta\right) \left(\alpha + \beta + d - 1\right)}}_{R(\alpha,\beta;d)}\cdot {m}_{d - 1}.
\end{multline}
The last expression is linear in $c$, with negative coefficient (because $m_{d-1}\geqslant 0$ and $d\geqslant 3$). Thus, it is non-positive if and only if the condition
\begin{align}
c\geqslant  \frac{\beta - \alpha}{\left(\alpha + \beta\right) \left(\alpha + \beta + d - 1\right)}
\end{align}
holds for all $d\geqslant 3$. Since $\beta-\alpha\geqslant 0$, this is equivalent to
\begin{align}
c\geqslant  \frac{\beta - \alpha}{\left(\alpha + \beta\right) \left(\alpha + \beta + 2\right)},
\end{align}
and under this condition, \eqref{eq:operator_coeffs} is non-positive. Replacing in this reasoning $c$ with $\frac{c}{2}$, to align with \Cref{thm:beta_bernstein}, finishes the proof.

The calculations are done in \texttt{Sympy} and shown in \Cref{alg:simplify_diff_op}.

\begin{lstlisting}[caption={Functional inequality on the moment generating function.},label=alg:simplify_diff_op]
import sympy as sm
from IPython.display import display

v,c,a,b,d = sm.symbols('v,c,alpha,beta,d',nonnegative=True)
m = sm.IndexedBase('m',real=True)

# differential operator
L = d*m[d]-(d-1)*c*m[d-1]-v*m[d-2]

# simplify by the recursion - eliminate term d-2
L0 = d*(a+b+d-1)*m[d] - (d-1)*(b-a)/(a+b)*m[d-1] - a*b/(a+b)**2*m[d-2]
subs = {v:a*b/(a+b)**2*1/(a+b+1)}
subs.update( {m[d-2]:sm.solve(L0,m[d-2])[0]} )
L = L.subs(subs)

# print the simplified operator
display(L.expand().coeff(m[d]).factor())
display(L.expand().coeff(m[d-1]).factor(c))

# operator upper-bounds (skipping term with  'm[d-2]' when non-negative)
md_lbound = sm.solve(L0.subs({m[d-2]:0}),m[d])[0].factor()
display(md_lbound)
L_ubound = L.subs({m[d]:md_lbound}).factor().collect(c)
L_ubound = L_ubound.as_poly([c,m[d-1]]).as_expr()
display(L_ubound)
display(L_ubound.subs({c:0}).factor())

# solve for c which makes the upper-bound non-positive
R = L_ubound.coeff(m[d-1])
c_solution = sm.solve(R,c)[0].factor()
display(c_solution)
\end{lstlisting}
\end{proof}
We are now ready to estimate the cumulant generating function:
\begin{claim}\label{claim:cumulant_bound}
For non-negative $t$ and $c$ as in \Cref{claim:diff_cumulant_bound} we have
\begin{align}
\psi(t) \leqslant 
\begin{cases}
-v \cdot \frac{ct+\log(1-ct)}{c^2} & \alpha\leqslant \beta, \quad t<\frac{1}{c} \\
\frac{v t^2}{2} & \alpha > \beta.
\end{cases}
\end{align}
\end{claim}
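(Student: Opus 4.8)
The plan is to integrate the pointwise bound of \Cref{claim:diff_cumulant_bound} along the segment $[0,t]$, invoking the integral representation of \Cref{claim:cumulant_integral}. Since $\phi$ is the moment generating function of a bounded random variable, it is finite and strictly positive on all of $\mathbb{R}$, so $s\mapsto \phi'(s)/\phi(s)$ is continuous and $\psi(t)=\int_0^{t}\phi'(s)/\phi(s)\,\mathrm{d}s$ is well defined; moreover, for $0\leqslant s\leqslant t<\frac1c$ we have $1-cs>0$, so the bound $\phi'(s)/\phi(s)\leqslant \frac{vs}{1-cs}$ (respectively $\leqslant vs$) holds throughout the range of integration and may be integrated directly.

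In the case $\alpha>\beta$ this is immediate: \Cref{claim:diff_cumulant_bound} gives $\phi'(s)/\phi(s)\leqslant vs$, whence
\begin{align}
\psi(t)=\int_0^{t}\frac{\phi'(s)}{\phi(s)}\,\mathrm{d}s\leqslant \int_0^{t} vs\,\mathrm{d}s=\frac{vt^2}{2}.
\end{align}

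In the case $\alpha\leqslant\beta$ with $c>0$ we instead use $\phi'(s)/\phi(s)\leqslant \frac{vs}{1-cs}$, so the claim reduces to evaluating $\int_0^{t}\frac{vs}{1-cs}\,\mathrm{d}s$. Splitting $\frac{s}{1-cs}=-\frac1c+\frac{1}{c(1-cs)}$ and integrating produces the antiderivative $-\frac{1}{c^2}\bigl(cs+\log(1-cs)\bigr)$ (this can also be left to \texttt{Sympy}), which gives
\begin{align}
\psi(t)\leqslant \int_0^{t}\frac{vs}{1-cs}\,\mathrm{d}s=-v\cdot\frac{ct+\log(1-ct)}{c^2},
\end{align}
exactly the asserted bound, finite for $t<\frac1c$. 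The degenerate subcase $\alpha=\beta$ (hence $c=0$) is handled either by passing to the limit $c\to 0$ in the first formula, which recovers $\tfrac{vt^2}{2}$, or directly, since \Cref{claim:diff_cumulant_bound} with $c=0$ yields $\phi'(s)/\phi(s)\leqslant vs$ and integration gives $\tfrac{vt^2}{2}$ as above.

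There is essentially no genuine obstacle: all the analytic work has been carried out in \Cref{claim:diff_cumulant_bound}, and what remains is bookkeeping. The only points that merit a line of justification are (i) positivity of $\phi$, so the integrand and its integral are meaningful; (ii) validity of the pointwise inequality over the entire interval $[0,t]$, which is where the restriction $t<\frac1c$ enters; and (iii) treating the boundary case $c=0$ consistently with the stated piecewise formula.
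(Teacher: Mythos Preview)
Your proof is correct and follows exactly the approach of the paper, which simply states ``This is implied by integrating the bound from the last claim.'' You have merely fleshed out the routine calculus (the explicit antiderivative and the handling of the degenerate case $c=0$) that the paper leaves implicit.
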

\begin{proof}
This follows from the previous claim, by integrating the first branch of the bound and, respectively, by taking the logarithm of the second branch.
\end{proof}
\begin{remark}
Note that this bound, in case $\alpha\leqslant \beta$, gives $\psi(t)\leqslant 1+\frac{t^{2} v}{2} + \frac{(\beta-\alpha) t^{3} v}{3(\alpha+\beta+2)} + O\left(t^{4}\right)$, and comparing this with the actual expansion $\psi(t) = 1+\frac{t^2\mathbf{Var}[X]}{2}+\frac{t^3\mathbf{E}[(X-\mathbf{E}[X])^3]}{6}+O(t^4)$ we find that the bound is sharp up to $O(t^4)$.
\end{remark}

Finally, we present the Chernoff inequality:
\begin{claim}\label{claim:cumulant_optimisation}
For non-negative $\epsilon$ and $c$ as in \Cref{claim:diff_cumulant_bound} we have
\begin{align}
\psi^{*}(\epsilon) \geqslant 
\begin{cases}
\frac{\epsilon^{2}}{2 v \left(1+\frac{2c \epsilon}{3 v} \right)} & \alpha\leqslant \beta \\
\frac{\epsilon^2}{2v} & \alpha>\beta.
\end{cases}
\end{align}
\end{claim}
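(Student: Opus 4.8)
The plan is to combine the cumulant upper bound of \Cref{claim:cumulant_bound} with the Cram\'er transform and then linearise the resulting closed form using \Cref{lemma:log_ineq}. Writing $G(t)$ for the right-hand side of \Cref{claim:cumulant_bound}, the pointwise inequality $\psi(t)\leqslant G(t)$ (valid for $t\in[0,1/c)$ when $\alpha\leqslant\beta$, and for all $t\geqslant 0$ when $\alpha>\beta$) gives $t\epsilon-\psi(t)\geqslant t\epsilon-G(t)$ for every admissible $t$, hence
\begin{align*}
\psi^{*}(\epsilon)=\sup_{t\geqslant 0}\bigl\{t\epsilon-\psi(t)\bigr\}\ \geqslant\ \sup_{t}\bigl\{t\epsilon-G(t)\bigr\}.
\end{align*}
It then suffices to lower-bound the last supremum, which I would do by evaluating the objective at a single well-chosen $t$ rather than carrying out the optimisation in full. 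For $\alpha>\beta$ this is immediate: with $G(t)=\tfrac{vt^{2}}{2}$ the choice $t=\epsilon/v$ gives $t\epsilon-G(t)=\tfrac{\epsilon^{2}}{2v}$, matching the claim (and in fact equal to the exact Cram\'er transform of $\tfrac{vt^{2}}{2}$).

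For $\alpha\leqslant\beta$ I would substitute $t^{\star}\triangleq\dfrac{\epsilon}{v+c\epsilon}$ into $G(t)=-v\,\dfrac{ct+\log(1-ct)}{c^{2}}$; note $t^{\star}\in[0,1/c)$ since $c\epsilon<v+c\epsilon$ and $v>0$. Using $1-ct^{\star}=\dfrac{v}{v+c\epsilon}$ and setting $u\triangleq\dfrac{c\epsilon}{v}$, the rational terms telescope and one obtains the clean identity
\begin{align*}
t^{\star}\epsilon-G(t^{\star})=\frac{v}{c^{2}}\bigl(u-\log(1+u)\bigr).
\end{align*}
Then \Cref{lemma:log_ineq}, in the rearranged form $u-\log(1+u)\geqslant\dfrac{u^{2}}{2\left(1+u/3\right)}$, together with $u^{2}=c^{2}\epsilon^{2}/v^{2}$, turns this into $\dfrac{\epsilon^{2}}{2v\left(1+\tfrac{c\epsilon}{3v}\right)}$, which is exactly the asserted bound; the degenerate value $c=0$ (the case $\alpha=\beta$) is covered by continuity of this expression in $c$, reducing to $\tfrac{\epsilon^{2}}{2v}$ as it should.

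The one genuinely non-mechanical step is recognising the right evaluation point $t^{\star}$ and verifying that the rational part of $t^{\star}\epsilon-G(t^{\star})$ collapses — the simplification $\tfrac{u^{2}+u}{1+u}=u$ is precisely what produces the closed form — so that is where I would be most careful about sign and bookkeeping errors, and it is also the place where the specific value of $c$ from \Cref{thm:beta_bernstein} enters. Everything else is a direct substitution followed by an appeal to \Cref{lemma:log_ineq}, so I do not anticipate further difficulty.
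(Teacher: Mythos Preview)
Your proposal is correct and follows essentially the same route as the paper: both start from the cumulant bound of \Cref{claim:cumulant_bound}, substitute the point $t^{\star}=\dfrac{\epsilon}{v+c\epsilon}$ (the paper arrives at it as the exact maximiser $t_{best}$, you simply plug it in as a witness for the supremum), obtain the closed form $\dfrac{v}{c^{2}}\bigl(u-\log(1+u)\bigr)$ with $u=c\epsilon/v$, and then apply \Cref{lemma:log_ineq}. One small remark: the specific numerical value of $c$ from \Cref{thm:beta_bernstein} is not actually used in this step --- it was needed earlier, in establishing \Cref{claim:diff_cumulant_bound} and hence \Cref{claim:cumulant_bound}; here $v,c$ function purely as generic positive parameters.
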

\begin{proof}
With the help of a computer algebra package, we plug the upper bound on $\psi(t)$ from \Cref{claim:cumulant_bound}
 into the Crammer-Chernoff bound, and find that it is maximized at $t_{best} = \frac{\epsilon}{c \epsilon + v}$  when $0<t<1/c$. This yields the bound
\begin{align}
\begin{aligned}
\psi^{*}(\epsilon) &\triangleq \sup\{t \epsilon- \psi(t) : 0\leqslant t \} \\
& \geqslant \epsilon t_{best} - \psi(t_{best}) \\
& = \frac{v}{c^2}\cdot \left( \frac{c \epsilon}{v} - \log{\left(\frac{c \epsilon}{v} + 1 \right)}\right).
\end{aligned}
\end{align}
The computations are done in \texttt{Sympy}, as shown in \Cref{alg:crammer_chernoff_solution}.
\begin{lstlisting}[caption={Optimizing the Crammer-Chernoff Bound.},label={alg:crammer_chernoff_solution}]
import sympy as sm
v,c,t,eps,x = sm.symbols('v,c,t,epsilon,x',positive=True)

log_MGF_bound = -v*(c*t+sm.log(1-c*t))/c**2
chernoff_crammer = eps*t-log_MGF_bound
# define t_best as the critical point
t_best = sm.solve(chernoff_crammer.diff(t,1),t)[0]
t_best
# show that t_best is a maximum 
assert not chernoff_crammer.diff(t,2).simplify().is_positive

# compute the Chernoff-Crammer bound, and simplify
exp_best = -chernoff_crammer.subs({t:t_best}).simplify()
exp_best = exp_best.subs({c*eps:x*v}).simplify().subs({x:c*eps/v},evaluate=False)
\end{lstlisting}
Using \eqref{eq:log_inequality} with $x=\frac{c \epsilon}{v}$ we finally obtain
\begin{align}
\psi^{*}(\epsilon) \geqslant \frac{\epsilon^{2}}{2 v \left(1+\frac{2c \epsilon}{3 v} \right)},
\end{align}
which finishes the proof.
\end{proof}
\Cref{thm:beta_bernstein} follows now by
\eqref{eq:chernoff_bound} and replacing  $c=\frac{\beta-\alpha}{(\alpha+\beta)(\alpha+\beta+2)}$ by $2c$. 

% !TEX pdfSinglePage

\subsection{Proof of \Cref{thm:lower_bound}}

Fix $\epsilon \geqslant 0$. Denote $Z=X-\mathbf{E}[X]$, and let $\phi(t) = \mathbf{E}[\exp(tZ)]$, $\psi(t) = \log \phi(t)$ be, respectively, the moment and cumulant generating function of $Z$. 
By Jensen's inequality $\phi(t) \geqslant \exp(t\mathbf{E}[Z])=1$, and so $\psi(t) \geqslant 0$ and $t\epsilon-\psi(t)\leqslant 0$ for $t\leqslant 0$. Since $\psi(0)=0$ we conclude that
\begin{align}\label{eq:optimal_inequality}
\psi^{*}(t) = \sup \left\{t\epsilon - \psi(t): t\geqslant 0\right\} = \sup \left\{t\epsilon - \psi(t): t \in \mathbb{R}\right\},
\end{align}
which shows that the Cram\'{e}r-Chernoff exponent equals the global maximum of the function $t\rightarrow t\epsilon - \psi(t)$ (Legendre-Fenchel
transform), achieved for $t\geqslant 0$.

It is known that the cumulant generating function is convex, and strictly convex for non-constant random variables~\cite{boucheron2003concentration}. Thus, $\psi(t)$ is strictly convex. As a consequence, we see that the function $t\rightarrow t\epsilon - \psi(t)$ is strictly concave. Thus, \eqref{eq:optimal_inequality} is maximized at the value $t$ which is a unique solution to
\begin{align}
\psi'(t)-\epsilon = 0.
\end{align}

The last equation defines $t=t(\epsilon)$ implicitly and seems to be not solvable with elementary functions. Nevertheless, the Lagrange Inversion Theorem 
ensures that $t$ is analytic as a function of $\epsilon$ and gives initial terms of the series expansion. 
More precisely, if $y = f(x)$ where $f$ is analytic at $0$ and $f(0)=0$, $f'(0)\not=0$ then $x$ expands into a power series in $y$ around $y=0$,
with the coefficient of the term $y^k$ equal to $\frac{1}{k}$ times the coefficient of $x^{k-1}$ in the series expansion of $\left(x/f(x)\right)^k$ when $k>0$ and $0$ when $k=0$;
for a reference, see for example~\cite{flajolet_sedgewick_2009}.
Applying this to $y=\epsilon$, $x=t$ and $f=\psi'$ we obtain
\begin{align}
t =\frac{\epsilon}{v}  - \frac{c \epsilon^{2}}{2 v^{2}}  + O(\epsilon^3),
\end{align}
where we do computations in \texttt{Sympy} as shown in \Cref{alg:best_chernoff}. Note that we use only up to three terms of the expansion of $f$, 
since we are interested in the first two terms for $x$.
\begin{lstlisting}[caption={Best possible Chernoff-type tail bounds.},label={alg:best_chernoff}]
import sympy as sm
from IPython.display import display

t,eps,v,c = sm.symbols('t,epsilon,v,c')

MGF = 1 + t**2*v/2 + t**3*v*c/6 
log_MGF = sm.log(MGF)
d_log_MGF = log_MGF.diff(t)
crammer_chernoff = eps*t-log_MGF

ks = [1,2]
t_eps_coeffs = [ 1/sm.factorial(k)*((t/d_log_MGF)**k).diff(t,k-1).series(t,0,1).subs({t:0}) for k in ks]
t_eps = sum( eps**k*c.simplify() for (c,k) in zip(t_eps_coeffs,ks) )
t_eps = t_eps.series(eps,0,3)
display(sm.Eq(sm.symbols('t_eps'),t_eps))

bound_best = crammer_chernoff\
  .subs({t:t_eps})\
  .series(eps,0,4)

display(sm.Eq(sm.symbols('\psi^{*}(\epsilon)'),bound_best))
\end{lstlisting}

Therefore, we obtain the following exponent in the Chernoff inequality:
\begin{align}
\psi^{*}(\epsilon) = t\epsilon - \psi(t) = \frac{\epsilon^{2}}{2 v} - \frac{c \epsilon^{3}}{6 v^{2}} + O\left(\epsilon^{4}\right),
\end{align}
which finishes the proof of the formula in \Cref{thm:lower_bound}. This expansion matches the expansion of the exponent in \Cref{thm:beta_bernstein} up 
to $O(\epsilon^4)$, which proves the optimality of the numeric constants.

%\section{Implementation}\label{sec:implementation}

%In this section, we provide the code which implements bounds from this work, the sub-gaussian bounds~\cite{marchal2017sub} and the sub-gamma bounds from~\cite{frankl1990some}, used in \Cref{sec:results:evaluation}. The Python code is shared in the reposity~\cite{mskorski_beta}.

\section{Conclusion}\label{sec:conclude}

This work established the closed-form sub-gamma type (Bernstein's) concentration bound for the beta distribution, with optimal constants. 
This solves the challenge settled by the prior work on sharp sub-gaussian bounds.

%% Acknowledgements %%
%%%%%%%%%%%%%%%%%%%%%%

\section*{Acknowledgements}
The author is grateful to the anonymous reviewers for their constructive comments on the manuscript and help with correcting the proofs, and to Lutz Duembgen for insightful technical discussions.

%\section*{Acknowledgments}

%% Appendices %%
%%%%%%%%%%%%%%%%%%%%%%
%\begin{appendix}
%\section{Appendix section}
%\end{appendix}

%% Funding %%
%%%%%%%%%%%%%%%%%%%%%%
%\begin{funding}
%\end{funding}

%% Bibliography %%
%%%%%%%%%%%%%%%%%%%%%%
\bibliographystyle{amsplain}
\bibliography{citations}

\providecommand{\bysame}{\leavevmode\hbox to3em{\hrulefill}\thinspace}
\providecommand{\MR}{\relax\ifhmode\unskip\space\fi MR }
% \MRhref is called by the amsart/book/proc definition of \MR.
\providecommand{\MRhref}[2]{%
  \href{http://www.ams.org/mathscinet-getitem?mr=#1}{#2}
}
\providecommand{\href}[2]{#2}
\begin{thebibliography}{10}

\bibitem{ben2017concentration}
Anna Ben-Hamou, St{\'e}phane Boucheron, and Mesrob~I Ohannessian,
  \emph{Concentration inequalities in the infinite urn scheme for occupancy
  counts and the missing mass, with applications}, Bernoulli \textbf{23}
  (2017), no.~1, 249--287.

\bibitem{bernstein1946theory}
Sergei Bernstein, \emph{The theory of probabilities}, 1946.

\bibitem{boucheron2003concentration}
St{\'e}phane Boucheron, G{\'a}bor Lugosi, and Olivier Bousquet,
  \emph{Concentration inequalities}, Summer School on Machine Learning,
  Springer, 2003, pp.~208--240.

\bibitem{castillo2017polya}
Isma{\"e}l Castillo et~al., \emph{P{\'o}lya tree posterior distributions on
  densities}, Annales de l'Institut Henri Poincar{\'e}, Probabilit{\'e}s et
  Statistiques, vol.~53, Institut Henri Poincar{\'e}, 2017, pp.~2074--2102.

\bibitem{clark1962pert}
Charles~E Clark, \emph{The pert model for the distribution of an activity
  time}, Operations Research \textbf{10} (1962), no.~3, 405--406.

\bibitem{dumbgen1998new}
Lutz Dumbgen, \emph{New goodness-of-fit tests and their application to
  nonparametric confidence sets}, Annals of statistics (1998), 288--314.

\bibitem{dutka1981incomplete}
Jacques Dutka, \emph{The incomplete beta function—a historical profile},
  Archive for history of exact sciences \textbf{24} (1981), no.~1, 11--29.

\bibitem{elder2016bayesian}
Sam Elder, \emph{Bayesian adaptive data analysis guarantees from
  subgaussianity}, arXiv preprint arXiv:1611.00065 (2016).

\bibitem{flajolet_sedgewick_2009}
Philippe Flajolet and Robert Sedgewick, \emph{Analytic combinatorics},
  Cambridge University Press, 2009.

\bibitem{frankl1990some}
Peter Frankl and Hiroshi Maehara, \emph{Some geometric applications of the beta
  distribution}, Annals of the Institute of Statistical Mathematics \textbf{42}
  (1990), no.~3, 463--474, available at:
  \url{https://www.ism.ac.jp/editsec/aism/pdf/042_3_0463.pdf}.

\bibitem{gauss1813disquisitiones}
C.F. Gauss, \emph{Disquisitiones generales circa seriem infinitam},  (1813).

\bibitem{gupta2004handbook}
A.K. Gupta and S.~Nadarajah, \emph{Handbook of beta distribution and its
  applications}, Statistics: A Series of Textbooks and Monographs, Taylor \&
  Francis, 2004.

\bibitem{henzi2022some}
Alexander Henzi and Lutz Duembgen, \emph{Some new inequalities for beta
  distributions}, arXiv preprint arXiv:2202.06718 (2022).

\bibitem{hoeffding1994probability}
Wassily Hoeffding, \emph{Probability inequalities for sums of bounded random
  variables}, The collected works of Wassily Hoeffding, Springer, 1994,
  pp.~409--426.

\bibitem{ibrahim2012contiguous}
Adel~K Ibrahim, \emph{Contiguous relations for 2f1 hypergeometric series},
  Journal of the Egyptian Mathematical Society \textbf{20} (2012), no.~2,
  72--78.

\bibitem{johnson1995continuous}
Norman~L Johnson, Samuel Kotz, and Narayanaswamy Balakrishnan, \emph{Continuous
  univariate distributions, volume 2}, vol. 289, John wiley \& sons, 1995.

\bibitem{jones2002fractional}
MC~Jones, \emph{On fractional uniform order statistics}, Statistics \&
  probability letters \textbf{58} (2002), no.~1, 93--96.

\bibitem{kahane1960proprietes}
Jean-Pierre Kahane, \emph{Propri{\'e}t{\'e}s locales des fonctions {\`a}
  s{\'e}ries de fourier al{\'e}atoires}, Studia Mathematica \textbf{19} (1960),
  no.~1, 1--25.

\bibitem{kim2009probabilistic}
Byung-cheol Kim and Kenneth~F Reinschmidt, \emph{Probabilistic forecasting of
  project duration using bayesian inference and the beta distribution}, Journal
  of Construction Engineering and Management \textbf{135} (2009), no.~3,
  178--186.

\bibitem{kipping2013parametrizing}
David~M Kipping, \emph{Parametrizing the exoplanet eccentricity distribution
  with the beta distribution}, Monthly Notices of the Royal Astronomical
  Society: Letters \textbf{434} (2013), no.~1, L51--L55.

\bibitem{marchal2017sub}
Olivier Marchal, Julyan Arbel, et~al., \emph{On the sub-gaussianity of the beta
  and dirichlet distributions}, Electronic Communications in Probability
  \textbf{22} (2017).

\bibitem{maurer2021concentration}
Andreas Maurer and Massimiliano Pontil, \emph{Concentration inequalities under
  sub-gaussian and sub-exponential conditions}, Advances in Neural Information
  Processing Systems \textbf{34} (2021).

\bibitem{sympy}
Aaron Meurer, Christopher~P. Smith, Mateusz Paprocki, Ond\v{r}ej
  \v{C}ert\'{i}k, Sergey~B. Kirpichev, Matthew Rocklin, AMiT Kumar, Sergiu
  Ivanov, Jason~K. Moore, Sartaj Singh, Thilina Rathnayake, Sean Vig, Brian~E.
  Granger, Richard~P. Muller, Francesco Bonazzi, Harsh Gupta, Shivam Vats,
  Fredrik Johansson, Fabian Pedregosa, Matthew~J. Curry, Andy~R. Terrel,
  \v{S}t\v{e}p\'{a}n Rou\v{c}ka, Ashutosh Saboo, Isuru Fernando, Sumith Kulal,
  Robert Cimrman, and Anthony Scopatz, \emph{Sympy: symbolic computing in
  python}, PeerJ Computer Science \textbf{3} (2017), e103.

\bibitem{mitov2004beta}
Kosto Mitov and Saralees Nadarajah, \emph{Beta distributions in stochastic
  processes}, STATISTICS TEXTBOOKS AND MONOGRAPHS \textbf{174} (2004),
  165--202.

\bibitem{muhlbach1972rekursionsformeln}
G~von M{\"u}hlbach, \emph{Rekursionsformeln f{\"u}r die zentralen momente der
  p{\'o}lya-und der beta-verteilung}, Metrika \textbf{19} (1972), no.~1,
  171--177.

\bibitem{perry2020statistical}
Amelia Perry, Alexander~S Wein, Afonso~S Bandeira, et~al., \emph{Statistical
  limits of spiked tensor models}, Annales de l'Institut Henri Poincar{\'e},
  Probabilit{\'e}s et Statistiques, vol.~56, Institut Henri Poincar{\'e}, 2020,
  pp.~230--264.

\bibitem{mskorski_beta}
Maciej Skórski, \emph{Bernstein bounds for beta distribution}, Feb 2023,
  \url{https://osf.io/ysvdh}.

\bibitem{stucchio2015bayesian}
Chris Stucchio, \emph{Bayesian a/b testing at vwo}, Whitepaper, Visual Website
  Optimizer (2015).

\bibitem{vidunas2003contiguous}
Raimundas Vid{\=u}nas, \emph{Contiguous relations of hypergeometric series},
  Journal of computational and applied mathematics \textbf{153} (2003),
  no.~1-2, 507--519.

\bibitem{wainwright2019high}
Martin~J Wainwright, \emph{High-dimensional statistics: A non-asymptotic
  viewpoint}, vol.~48, Cambridge University Press, 2019.

\bibitem{williams1975394}
DA~Williams, \emph{394: The analysis of binary responses from toxicological
  experiments involving reproduction and teratogenicity}, Biometrics (1975),
  949--952.

\bibitem{mathworld_hypergeom}
Mathworld Wolfram,
  \emph{\url{www.functions.wolfram.com/HypergeometricFunctions}},  (2020).

\bibitem{zhang2020non}
Anru~R Zhang and Yuchen Zhou, \emph{On the non-asymptotic and sharp lower tail
  bounds of random variables}, Stat \textbf{9} (2020), no.~1, e314.

\bibitem{zhang2002beta}
Jin Zhang and Yuehua Wu, \emph{Beta approximation to the distribution of
  kolmogorov-smirnov statistic}, Annals of the Institute of Statistical
  Mathematics \textbf{54} (2002), no.~3, 577--584.

\end{thebibliography}
\end{document}